\numberwithin{equation}{section}
\DeclareMathAlphabet{\mathpzc}{OT1}{pzc}{m}{it}
\theoremstyle{plain}
\newtheorem{Th}{Theorem}[section]
\newtheorem{Lemma}[Th]{Lemma}
\newtheorem{Cor}[Th]{Corollary}
\newtheorem{Prop}[Th]{Proposition}
\newtheorem*{Th*}{Theorem}
\newtheorem*{Cor*}{Corollary}
\theoremstyle{definition}
\newtheorem{Def}[Th]{Definition}
\newtheorem{Rem}[Th]{Remark}
\newtheorem{Ex}[Th]{Example}
\newtheorem*{Acknowledgement*}{Acknowledgement}
\newtheorem*{Def*}{Definition}
\newcommand{\tr}{\textup{tr}}
\newcommand{\Cl}{\text{C}\ell}
\newcommand{\la}{\langle}
\newcommand{\ra}{\rangle}
\newcommand{\SO}{\text{SO}}
\newcommand{\SPN}{\text{Spin}}
\newcommand{\SU}{\text{SU}}
\newcommand\norm[1]{\left\lVert#1\right\rVert}
\newcommand\IMH{\textup{Im}{\bf H}}
\newcommand\Id{\textup{Id}}
\newcommand\spinc{\text{Spin}^{c}}
\newcommand\SPNC{\text{Spin}^{c}}
\newcommand{\ssslash}{\mathbin{/\mkern-5mu/\mkern-5mu/}}
\def\l@subsection{\@tocline{2}{0pt}{2.5pc}{5pc}{}}
\title[A Fueter Operator for 3/2-spinors]{A Fueter operator for 3/2-spinors}
\author{Ahmad Reza Haj Saeedi Sadegh}
\sloppy\address{Dartmouth College, Northeastern University}
\email{ahmadreza.hajsaeedisadegh@dartmouth.edu}
\email{a.hajsaeedisadegh@northeastern.edu}
\author{Minh Lam Nguyen}
\address{Washington University in St. Louis}
\email{minhn@wustl.edu}
\subjclass[2020]{Primary 53Cxx, 57Rxx, 58Jxx, 57Kxx } 
\begin{document}

	\maketitle
	
	\begin{abstract}
        We show the non-compactness of \textcolor{black}{the moduli space of solutions with a uniform bound on the curvatures} of the monopole equations for $3/2$-spinors on a closed 3-manifold is equivalent to the existence of `3/2-Fueter sections' 
        that are solutions of an overdetermined non-linear elliptic differential equation. These are sections of a fiber bundle 
        whose fiber is a special 4-dimensional submanifold of the hyperk\"ahler manifold of center-framed charged one \textcolor{black}{$SU(3)$}-instantons on $\mathbf{R}^4$. This fiber bundle does not inherit a hyperk\"ahler structure.

		\noindent\textbf{Keywords:} Rarita-Schwinger operator, Seiberg-Witten equations, gauge theory, Fueter sections
  
	\end{abstract}

	\tableofcontents


\section*{Introduction}
\textcolor{black}{There is a generalization of the Seiberg-Witten equations (in dimension three or four) where a spinor bundle is replaced by some fiber bundle whose fibers are hyperk\"ahler manifolds admitting certain $\text{Sp}(1)\times_{\mathbf{Z}/2} \text{U}(1)$-symmetry (e.g., \cite{taubes1999nonlinear, Taubes:2016voz, MR1392667, haydys2012gauge, haydys2015compactness}). In Physics, such a geometric PDE defined on a base manifold (\textit{source manifold}) associated with such fiber bundle (\textit{target manifold}) is called a gauged-sigma model. The generalized Seiberg-Witten (GSW) equations can be thought of as an analog of the symplectic vortex equations \cite{MR1777853}, and the moduli space of solutions of certain GSW equations is conjectured to carry some important information about manifolds with special holonomy groups (see, e.g., \cite{MR2980921, MR1634503}). Unlike the classical Seiberg-Witten equations, the moduli space is not expected to be compact. Thus, understanding the boundary of the moduli space of GSW equations is an active research problem in mathematical gauge theory (e.g., see \cite{parker2023deformationsmathbbz2harmonicspinors, parker2024gluingmathbbz2harmonicspinors, taubes2024nonconvergentsequencessolutionsmassive}). Moreover, any invariant derived from studying a gauged-sigma model is expected to carry both information about the target manifold and the source manifold. From a low-dimensional topology viewpoint, invariants obtained from GSW may shed some new information about the topology of 3- or 4-dimensional manifolds.}

\textcolor{black}{(Non-linear) Dirac (or Dirac-type) operators are among the key ingredients in defining various (generalized) SW equations. In \cite{nguyen2023pin, sadegh2023threedimensional}, we propose a different generalization of the Seiberg-Witten equations where we replace the Dirac operator with a non-Dirac type operator called the Rarita-Schwinger operator (a definition is given below). Our motivation is to introduce a program defining a topological (or geometrical) invariant of 3- and 4-manifolds using the Rarita-Schwinger operator in the context of Seiberg-Witten-type gauge theory. The Rarita-Schwinger operator originally was introduced to study the dynamics of $3/2$-spin particles, it is one of the few meaningful geometric first-order elliptic differential operators acting on a vector bundle which is not required to be \textit{a priori} a Clifford module (e.g., see \cite{MR1129331, bar2021manifolds}).}

\textcolor{black}{The RS-SW equations share many similar features with generalized Seiberg-Witten equations (specifically, multiple-spinor Seiberg-Witten equations). However, there is already evidence that studying the Rarita-Schwinger Seiberg-Witten (RS-SW)-type equations is interesting. Firstly, in dimension 4, the non-compactness of the moduli space of solutions of  RS-SW equations is directly tied with only topological information of the manifold (see \cite{nguyen2023pin} for more details). Secondly, in dimension 3, under a uniform $L^6$-boundedness of the curvature of the connection involved in the definition of the RS operator, there is a sequence of solutions up to gauge transformations converges weakly to a limiting (twisted) spinor and connection solving degenerate RS-SW equations (see \cite{sadegh2023threedimensional}, also see below for more details). Thirdly, which is the main point of this paper, we extend a Haydys correspondence in this setting, we show that the limiting solutions of the degenerate RS-SW equations correspond to solutions of a non-linear Rarita-Schwinger operator $\mathfrak{Q}$. Unlike the Fueter operator that appears in ordinary generalized Seiberg-Witten theory, our $\mathfrak{Q}$ is over-determined. This is a striking feature of $\mathfrak{Q}$ that is worth exploring.}

\subsection*{Main result}Let $Y$ be a closed, oriented smooth Riemannian $3$-manifold. Suppose $P_{spin^c}\to Y$ is a $spin^c$ structure on $Y$ and $\slashed{S}:= P_{spin^c} \times_{Spin^c(3)} \bf{H}$ is denoted by the associated spinor bundle over $Y$. $\slashed{S}$ is a Clifford module; and thus there is a Dirac operator $D_A$ on it. $D_A$ is determined by 
\begin{itemize}
    \item a Clifford multiplication induced by the Riemannian metric on $Y$,
    \item the covariant derivative $\nabla_A$ induced by the Levi-Civita connection on $Y$ and a $U(1)$-connection $A \in \mathcal{A}(\det P_{spin^c})$ on $\det P_{spin^c}=P_{spin^c}/Spin(3)$.
\end{itemize}
The Dirac operator $D_A$ acts on sections of $\slashed{S}$, which are referred to as $1/2$-spinors in Physics. There is also a cousin to the Dirac operator called the \textit{Rarita-Schwinger (RS) operator} \cite{PhysRev.60.61, homma2019kernel, bar2021manifolds}. Associated to a $spin^c$ structure on $Y$, an RS operator is also defined based on a choice of a $U(1)$-connection on the determinant line bundle of the $spin^c$ structure. Very briefly, an RS operator $Q_A$ is defined by
$$Q_A = \pi_{3/2} \circ D^{TY}_A|_{\Gamma(X, ker\, c)},$$
where 
\begin{itemize}
    \item $ker\, c$ denotes the sub-bundle of $T^*Y \otimes \slashed{S}$ that is the kernel of the Clifford multiplication $c: T^* Y \otimes \slashed{S} \to \slashed{S}$,
    \item $D^{TY}_A$ is a Dirac operator on $T^*Y \otimes \slashed{S}$,
    \item $\pi_{3/2}$ is the orthogonal projection of $T^*Y \otimes \slashed{S} \to ker\,c$.
\end{itemize}

The operator $Q_A$ acts on sections of $ker\,c$, which are referred to as \textit{$3/2$-spinors}. $Q_A$ is the first example of many higher-spin Dirac operators associated with a $spin^c$ structure on $Y$. Having set up the terminologies, the Rarita-Schwinger-Seiberg-Witten ( RS-SW) equations is a system of geometric PDEs on $Y$ that look for unknowns $(A, \psi) \in \mathcal{A}(\det P_{spin^c}) \times \Gamma(Y, ker\, c)$ satisfying
\begin{equation}\label{eq: RS-SW}
    \begin{cases}
        Q_A \psi = 0,\\
        \star_3 F_{A} - \mu(\psi)=0.
    \end{cases}
\end{equation}
where $\mu: ker\,c \subset \Gamma(Y, T^*Y \otimes \slashed{S}) \to i\mathfrak{su}(2)$ is a quadratic map that maps each $3/2$-spinor $\psi$ to the traceless part of the endomorphism $\psi \psi^*$ on $\slashed{S}$. 

By blowing-up along the locus of the reducible solutions $\psi\equiv 0$ of \eqref{eq: RS-SW}, we obtain the equations with an extra unknown $\epsilon \in (0,\infty)$
\begin{equation}\label{eq: RS-SWblowup}
    \begin{cases}
        \textcolor{black}{\norm{\psi}_{L^4}=1},\\
        Q_A \psi = 0,\\
        \epsilon^2 \star_3 F_A - \mu(\psi) = 0.
    \end{cases}
\end{equation}
\textcolor{black}{The reason for blowing-up using $L^4$-norm is given in Remark \ref{RemL^4} below.}

The equation \eqref{eq: RS-SW} has an abelian gauge symmetry given by $\mathcal{G} = Maps(Y, S^1)$. The moduli space is defined to be the solution space of \eqref{eq: RS-SW} mod $\mathcal{G}$. Note that the solutions to \eqref{eq: RS-SW} can also be thought of as critical points of a certain modified Chern-Simons-Dirac functional $\mathcal{L}^{RS}$, where \eqref{eq: RS-SW} is interpreted as the minimizing condition for $\mathcal{L}^{RS}$ (see \cite{sadegh2023threedimensional} for more details). 

In dimension four, the  RS-SW equations were first introduced by the second-named author in \cite{nguyen2023pin}. The moduli space of the $4$-D  RS-SW equations is known to be \textit{not} compact in general--a feature that is different from the classical Seiberg-Witten theory. There is a topological obstruction in terms of an inequality relating the second betti number $b_2$ and the signature of the four manifolds that guarantees the non-compactness of the moduli space. Thus, on those four manifolds that satisfy such topological conditions, we are guaranteed to have non-trivial solutions of the  RS-SW equations. 

The story is a bit different in dimension three. In \cite{sadegh2023threedimensional}, we studied the behavior of convergence of solutions of \eqref{eq: RS-SW} after the blow-up procedure. In particular, we proved that if $\{(A_n, \psi_n, \epsilon_n)\}$ is a sequence of solutions of \eqref{eq: RS-SWblowup} such that $F_{A_n}$ are uniformly bounded in $L^6$ and $\epsilon_n$ gets arbitrarily small, then away from a closed nowhere-dense subset $Z\subset Y$, after passing through a subsequence and up to gauge transformations, $A_n$ converges weakly to $A$ in $L^2_{1,loc}$, $\psi_n$ converges weakly to $\psi$ in $L^2_{2,loc}$, and $(A,\psi)$ must satisfy the following degenerate situation
\begin{equation}\label{eq:degenerate}
Q_A \psi = 0, \quad \mu(\psi) =0.
\end{equation}

\begin{Rem}\label{RemL^4}
    \textcolor{black}{We give a brief comment on why we blow up \eqref{eq: RS-SW} using the $L^4$-norm as opposed to the $L^2$-norm that usually appears in ordinary generalized Seiberg-Witten theory. Unlike the Wietzenb\"ock formula of the Dirac operator, $Q_A^2$ contains a second-order term $P_AP^*_A$, where $P_A := \pi_{3/2}\circ \nabla_A$. A common strategy to prove such a convergence result stated above is starting with a Green's integration by parts formula, where one has to control $\norm{P^*_A\psi}_{L^2}$. Essentially, the divergence of $\psi$ can be controlled by the curvature $F_A$ via gauge-fixing if $\psi$ is assumed to have $L^4$-unit norm by a standard elliptic estimate applied to Rarita-Schwinger operator in dimension 3 (see Section 3 of \cite{sadegh2023threedimensional} for more details).}
\end{Rem}

The limiting solutions of the moduli space are called \textit{$3/2$-Fueter sections}. Formally, they solve \eqref{eq: RS-SWblowup} by setting $\epsilon = 0$. Then one of the consequences of the above statement is that the existence of $3/2$-Fueter sections is an obstruction to the compactness of moduli space of \eqref{eq: RS-SW}. Indirectly, if one can show that \eqref{eq: RS-SWblowup} with $\epsilon = 0$ has no solution in certain situations, then potentially the moduli space of \eqref{eq: RS-SW} would be compact!

In this paper, we study the degenerate situation of \eqref{eq: RS-SW}. Viewing the Clifford module $T^* Y\otimes \slashed{S}$ as a (linear) hyperk\"ahler fiber bundle over $Y$ and $\mu$ as an associated hyperk\"ahler moment \textcolor{black}{map}, the hyperk\"ahler reduction $\mu^{-1}(0)/S^1$ in turn is another hyperk\"ahler fiber bundle over $Y$ \cite{hitchin1987hyperkahler}. We consider a subbundle of $\mu^{-1}(0)/S^1$ by intersecting the entire total space with $ker\,c$. Denote such a subbundle by $\mathbb{W}_0$ \textcolor{black}{whose fiber dimension is $4$ (ref. Proposition \ref{p:w_0ismanifold})}. Note that $\mathbb{W}_0$ no longer necessarily inherits a hyperk\"ahler structure. However, there is a one-to-one correspondence between solutions of \eqref{eq:degenerate} and solutions of a certain non-linear differential operator \textcolor{black}{$\mathfrak{Q}$} defined on $\mathbb{W}_0$. In particular, the main result of this paper shows that

\begin{Th*}\label{maintheorem}(ref. Theorem \ref{th:maintheorem} and Theorem \ref{th:overdetermination})
    \textcolor{black}{Any solution $(A,\Phi)$ of \eqref{eq:degenerate} gives a solution 
     $\Phi_0\in\Gamma(\mathbb{W}_0)$ of the $3/2$-Fueter equation 
     \begin{equation}\label{eq:3:2Fueterequation}
         \color{black}\mathfrak{Q}\Phi_0=0.
     \end{equation}
    Conversely, for any solution $\Phi_0$ of \eqref{eq:3:2Fueterequation}, there exists a $\SPNC(3)$-structure $P_{\SPNC}\to Y$ with a connection $A$
     on its determinant line bundle, a section  $\Phi\in \Gamma(\mathbb{W})$ where $\mathbb{W}:=P_{\SPNC}\times_{\SPNC(3)}W$ such that the pair $(A,\Phi)$ satisfies the equation \eqref{eq:degenerate}. Furthermore, $\mathfrak{Q}$ is a non-linear over-determined elliptic operator.}
\end{Th*}

We call \textcolor{black}{$\mathfrak{Q}$} a \textit{3/2-Fueter operator}. To the best of our knowledge, this is the first time such an operator is defined in the literature. Its construction mirrors that of the Fueter operator, which can be thought of as a generalized (non-linear) Dirac operator defined on a hyperk\"ahler fiber bundle over $Y$ associated with a \textcolor{black}{$\SPNC$} structure (see \cite{haydys2012gauge, haydys2015compactness, taubes1999nonlinear}). Whenever there is a Dirac operator, there is also an RS operator. Following this philosophy, it is not unexpected to have a counterpart of the Fueter operator for the $3/2$-spinors. Our 3/2-Fueter operator \textcolor{black}{$\mathfrak{Q}$} fulfills exactly this role. In other words, \textcolor{black}{$\mathfrak{Q}$} is the gauged-sigma model for the RS operator.

\begin{Rem}
    One should think of the main theorem above as a so-called \textit{Haydys correspondence} for 3/2-spinors (see \cite{haydys2012gauge, haydys2015compactness}). However, there is a notable difference in our situation: \textcolor{black}{$\mathfrak{Q}$} is \textit{over-determined}, which indicates there might be certain situations of three-manifolds where there should be no solutions of \eqref{eq:degenerate}. Combined with our result in \cite{sadegh2023threedimensional}, possibly on certain three-manifolds, the moduli space with a uniform bound on curvatures of the solutions to the RS-SW equations is compact. This begs further investigation and will be addressed in our future works.
\end{Rem}

\begin{Rem}
    We have a rather explicit local description of the fibers of $\mathbb{W}_0$ (ref. Proposition \ref{p:w_0ismanifold}). The fibers are $4$-dimensional manifolds and can be viewed as a variety, which means that the total space $\mathbb{W}_0$ is $7$-dimensional. It would be interesting to know more about the topology and geometry of $\mathbb{W}_0$.
\end{Rem}

\textcolor{black}{\begin{Rem}\label{r:broaderscope}
    In this article, the construction of the fiber bundle $\mathbb{W}_0$ and the 3/2-Fueter operator is obtained from applying the fiberwise hyperk\"ahler reduction to the subbundle of $3/2$-spinors associated with a $\SPN^c$-structure on $Y$. One could generalize from the $\SPN^c$-structure to a more general \emph{Clifford module}, that is a complex (hermitian) vector bundle $\mathbb{E}\to Y$ that carries a fiberwise Clifford action of the tangent bundle $TY$, i.e. a bundle map $c:TY\otimes \mathbb{E}\to \mathbb{E}$ with a fixed \textup{Clifford connection}. In this situation, one can consider the subbundle $\mathbb{W}:=\ker(c)$ which carries a similar contraction of the Dirac operator, a generalized Rarita-Schwinger operator $Q:\Gamma(\mathbb{W})\to \Gamma(\mathbb{W})$. Assume there is a fiberwise moment map $\mu:\mathbb{E}\to i\mathfrak{su}(2)$ that is invariant under the $U(1)$-action (induced from the complex structure). If $0\in i\mathfrak{su}(2)$ is a regular value for  both $\mu$ and $\mu|_{\mathbb{W}}$ then one can apply the hyperk\"ahler reductions to fibers of $\mathbb{W}$ to obtain a fiber bundles $\mathbb{E}_0\to Y$ and $\mathbb{W}_0\to Y$. The bundle $\mathbb{E}_0$ is a hyperk\"ahler bundle which carries a Fueter operator. It would be interesting to see if one can mimic the method in this article to obtain a 3/2-Fueter operator $\mathfrak{Q}$ on $\mathbb{W}_0$. 
\end{Rem}}

The  RS-SW equations can be of interest to high-energy Physics. From a physical perspective, the equations describe a minimally coupled $U(1)$ gauge field with spin $3/2$-charged matter. In Physics, spin $3/2$-fields are usually considered to be a part of the supergravity multiplet, but those would not be charged under $U(1)$ gauge symmetry. Thus, solutions of \eqref{eq: RS-SW} should be referred to as matters. However, it is possible to swap out $U(1)$ with some other compact Lie group $G$ to account for twisted supergravity phenomenon and derive a similar behavior of convergence result as in \cite{sadegh2023threedimensional} and a version of Haydys correspondence analogous to the main result of this paper. This circle of ideas will be addressed elsewhere.

As a concluding remark for this subsection, we state the main motivation for us to study \eqref{eq: RS-SW}. Our larger goal is to derive a new $3$-manifold invariant using RS operator. The equations we study fall largely within a framework of Langragian QFTs. Thus, one could either define a numerical invariant via Rozansky-Witten's approach by taking a formal path integral of $\exp(\text{const}\cdot \mathcal{L}^{RS})$ against the moduli space of \eqref{eq: RS-SW}, or define a homological invariant via a certain Floer theory approach that should categorify the previously mentioned numerical invariant. The results in our previous paper \cite{sadegh2023threedimensional} and in this current one serve as a first step toward this program.


\subsection*{Organization of the paper}
In Section \ref{s:preliminaries} of the paper, we discuss background materials. We begin by recalling the notions of connection and covariant derivative on fiber bundles, 3/2-spinors, and hyperk\"ahler structure and its reduction. We then discuss the notion of hyperk\"ahler bundles and their associated generalized Dirac operators.  

In Section \ref{s:aquaternionicmodulispace}, we introduce an 8-dimensional hyperk\"ahler manifold with an action of \textcolor{black}{$\SO(3)\times \SO(3)$} and special invariant 4-dimensional submanifold $W_0$. We study this 4-manifold in more detail in the subsequent subsections.  We show the existence of a canonical line bundle over this manifold. We then construct a rank four fiber with fiber $W_0$ and introduce a nonlinear differential operator on this bundle that is called the \textit{3/2-Fueter operator}.

In Sections \ref{s:Haydyscorrepsondence} and \ref{s:overdetermination}, we prove the paper's main result by dividing it into two parts. In the first part, we prove the existence of solutions to the degeneracy equations \eqref{eq:degenerate} is equivalent to the existence of solutions to the 3/2-Fueter operator. In the second part, we prove that the 3/2-Fueter equation is an overdetermined elliptic equation.

\begin{Acknowledgement*}
   We would like to thank \textcolor{black}{Cumrun Vafa} and Sergei Gukov for a discussion about the broad physical framework that our work could fit into. The second-named author would like to thank Gregory Parker for some of the discussions and suggestions regarding the construction of the 3/2-Fueter operator \textcolor{black}{$\mathfrak{Q}$}. \textcolor{black}{Lastly, we are grateful for the detailed comments given by anonymous reviewers that help improve the clarity of the proof of the main theorem and the overall readability of the paper. Parts of this work were carried out while the authors were at Instantons and Foams conference at MIT in May 2023. The conference was supported by Simons Collaboration on New Structures in Low-dimensional Topology.}
\end{Acknowledgement*}

\section{Preliminaries}\label{s:preliminaries}
\subsection{Connections on fiber bundles}
We first recall some basic facts about \textcolor{black}{Ehresmann} connections on a fiber bundle. Consider a fiber \textcolor{black}{bundle} $\pi:\mathbb{M}\to Y$. The \emph{vertical bundle} $\mathcal{V}\mathbb{M}\to\mathbb{M}$ is defined as the kernel of the differential map $d\pi:T\mathbb{M}\to TY$. A choice of (\textcolor{black}{Ehresmann}) connection on $\mathbb{M}\to Y$ determines a bundle decomposition $T\mathbb{M}=\mathcal{V}\mathbb{M}\oplus \mathcal{H}\mathbb{M}$ where \textcolor{black}{we call} $\mathcal{H}\mathbb{M}$ the horizontal bundle whose fibers naturally identify with tangent spaces of $Y$, i.e., for all $p\in \mathbb{M}$:
\[\color{black}\mathcal{H}\mathbb{M}|_p\buildrel{d\pi}\over\simeq T_{\pi(p)}Y.\]

The covariant derivative (associated with the connection) of a section of the fiber bundle is defined as follows. Let $\phi\in\Gamma(\mathbb{M})$ be a section, i.e., a map $\phi:Y\to \mathbb{M}$ such that $\pi\circ\phi=\textup{id}_Y.$ Taking a differential gives a bundle map $d\phi:TY\to T\mathbb{M}.$ Composing with the projection map $T\mathbb{M}\to \mathcal{V}\mathbb{M}$ gives the covariant derivative of the section:
\[\nabla\phi:TY\to \mathcal{V}\mathbb{M}\]
and equivalently, a $\phi^*\mathcal{V}\mathbb{M}$-valued one-form
\begin{equation}\label{eq:connectiononeform}
    \nabla\phi\in \Omega^1(Y,\phi^*\mathcal{V}\mathbb{M}).
\end{equation}

\begin{Rem}
    When $\pi:\mathbb{M}\to Y$ is a vector bundle, one has a natural isomorphism $\mathbb{M}\simeq \phi^*\mathcal{V}\mathbb{M}$ and hence the covariant derivative above is an $\mathbb{M}$-valued one-form.

 \end{Rem}

When $P\to Y$ is principal $G$-bundle, by a connection, we mean an equivariant decomposition $TP=\mathcal{V}P\oplus \mathcal{H}P$. Fixing a connection on the principal bundle $P\to Y$
    gives a covariant derivative on any associated bundle $\mathbb{M}=P\times_{G}M$
    where $M$ is a \textcolor{black}{manifold with an action of $G$}; any section $\phi\in \Gamma(\mathbb{M})$ can be considered as an equivariant function $\phi:P\to M$. The restriction of the differential $d\phi:TP\to TM$ to the horizontal subbundle $\mathcal{H}P$ gives \textcolor{black}{a $G$-equivariant} bundle map
    \[d\phi|_{\mathcal{H}P}:\mathcal{H}P\to TM.\] 
    \textcolor{black}{which is equivalent to a map $\nabla\phi:TY\to P\times_G TM$. Now using the canonical isomorphism $\mathcal{V}\mathbb{M}\simeq P\times_G TM$ we have obtained the bundle map}
    \[\color{black}\nabla\phi:TY\to \mathcal{V}\mathbb{M},\]
    and equivalently, a vector bundle valued one-form
    \[\nabla\phi\in \Omega^1(Y,\phi^*\mathcal{V}\mathbb{M}).\]

\subsection{3/2-spinors}\label{ss:3/2-spionors}In this article, $\bf{H}$ denotes the division algebra of quaternions with its natural inner product, and $\IMH$ denotes its subspace of pure imaginary elements.
Let $\bf{H}^{\pm}$ be the positive and negative half-spin representations of the Clifford algebra $\Cl(4)$ with their natural identification with quaternions $\bf{H}$. Both $\bf{H}^{\pm}$ give the two spin representations of the Clifford algebra $\Cl(3)$; however, they are equivalent as the representations of the spin group $\SPN(3)\subset \Cl(3)$.

\textcolor{black}{The Clifford representation of $\Cl(3)=\Cl(\IMH)$ on $\bf{H}$ is induced from the standard left action of the imaginary quaternions on the quaternions. We explicitly describe the action in terms of the identification $\bf{H}\simeq \mathbf{C}^2$, which we will use later in the article. Firstly, we consider the complex structure on $\mathbf{C}^2$ given by
\[\sqrt{-1}:\begin{bmatrix}
    z\\
    w
\end{bmatrix}\mapsto \begin{bmatrix}
    0&-1\\
    1&0
\end{bmatrix}\begin{bmatrix}
    z\\
    w
\end{bmatrix}=\begin{bmatrix}
    -w\\
    z
\end{bmatrix}.\]
Now we define the action of $\IMH$ on $\mathbf{C}^2$ by defining the action of the generators $I,J,K$ as follows
\begin{align*}\color{black}
    I:\begin{bmatrix}
        z\\
       w
    \end{bmatrix}\mapsto \begin{bmatrix}
        i&0\\
        0&i
    \end{bmatrix}\begin{bmatrix}
        z\\
        w
    \end{bmatrix}=\begin{bmatrix}
        iz\\
        iw
    \end{bmatrix}\\
    \color{black}
    J:\begin{bmatrix}
        z\\
        w
    \end{bmatrix}\mapsto \begin{bmatrix}
        0&-\tau\\
        \tau&0
    \end{bmatrix}\begin{bmatrix}
        z\\
        w
    \end{bmatrix}=\begin{bmatrix}
        -\bar{w}\\
        \bar{z}
    \end{bmatrix}\\
    \color{black}
    K:\begin{bmatrix}
        z\\
        w
    \end{bmatrix}\mapsto \begin{bmatrix}
        0&-i\tau\\
        i\tau&0
    \end{bmatrix}\begin{bmatrix}
        z\\
       w
    \end{bmatrix}=\begin{bmatrix}
        -i\bar{w}\\
        i\bar{z}
    \end{bmatrix}
\end{align*}
Here $\tau:\mathbf{C}\to\mathbf{C}$ is the complex conjugation.
It is easy to see that this representation is complex.}
The Clifford action on the spinors $\bf{H}$ then gives a linear map \textcolor{black}{(identifying $\mathbf{C}\otimes\IMH\simeq \mathbf{C}^3$)}
\[\color{black}c:\textup{Hom}(\mathbf{C}\otimes\IMH,\bf{H})\to\bf{H}\]
\[\color{black}\begin{bmatrix}
    z_1&z_2&z_3\\
    w_1&w_2&w_3
\end{bmatrix}\mapsto \begin{bmatrix}
    iz_1-\Bar{w}_2-i\Bar{w}_3\\
    iw_1+\Bar{z}_2+i\Bar{z}_3
\end{bmatrix}\]
whose kernel we call the space of \emph{3/2-spinors}:
\[W:=\ker(c).\]
The complement of $W$ in $\textup{Hom}(\mathbf{C}\otimes\IMH,\bf{H})$ is a copy of spinor space $\bf{H}$. Indeed, consider the map
\begin{equation}\label{eq:iotamap}
    \iota:\bf{H}\to \textup{Hom}(\mathbf{C}\otimes\IMH,\bf{H})
\end{equation}
\[\color{black}s=\begin{bmatrix}
    z\\w
\end{bmatrix}\mapsto I^*\otimes Is+J^*\otimes Js+K^*\otimes Ks=\begin{bmatrix}
    iz&-\bar{w}&-i\bar{w}\\
    iw&\bar{z}&i\bar{z}
\end{bmatrix}\]
\textcolor{black}{where $\{I^*,J^*,K^*\}$ is the dual basis associated with $\{I,J,K\}$.} One has 
the orthogonal decomposition
\[\textup{Hom}(\mathbf{C}\otimes\IMH,\mathbf{H})= \iota(\boldsymbol{H})\oplus W.\]
By ``orthogonal'', we refer to the hermitian structure on $\textup{Hom}(\mathbf{C}\otimes\IMH,\bf{H})$ given by $\langle \phi,\psi\rangle:=\frac{1}{2}\tr(\phi\psi^*)$. This Hermitian structure coincides with the tensor hermitian structure on $(\mathbf{C}\otimes\IMH)^*\otimes\bf{H}\simeq \textup{Hom}(\mathbf{C}\otimes\IMH,\bf{H})$.

We also put a Riemannian structure on $S:=\textup{Hom}(\mathbf{C}\otimes\IMH,\bf{H})$, by putting the inner product on the tangent spaces $T_{\phi}S\simeq S$ given by
\[(A,B):=\frac{1}{2}\textup{Re}(\tr(AB^*)).\]

\subsection{\textcolor{black}{Hyperk\"ahler} manifolds}\label{sec:hyperkahlermanifoldsfuetersections}
Let $(M,g,I,J,K)$ be a hyperk\"ahler manifold that is a Riemannian manifold $(M,g)$ with three integrable almost complex structures on the tangent bundle that are $g$-\textcolor{black}{K\"ahler} and satisfy the quaternionic relations:
\[I^2=J^2=K^2=IJK=-1.\]
One has three K\"ahler forms
\[\omega_I(u,v):=g(Iu,v), \ \ \omega_J(u,v):=g(Ju,v), \ \ \omega_K(u,v):=g(Ku,v).\]
\begin{Prop}\cite{hitchin1987hyperkahler}
    With respect to the complex structure $I$, one has the holomorphic symplectic form
    \[\Omega=\omega_J+i\omega_K.\]
    Conversely, any K\"ahler manifold with a holomorphic symplectic form is hyperk\"ahler. 
\end{Prop}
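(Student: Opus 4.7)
The forward direction reduces to three verifications for $\Omega := \omega_J + i\omega_K$: that it has type $(2,0)$ with respect to $I$, that it is closed, and that it is non-degenerate. The type condition is an algebraic check using the quaternionic relations $JI = -K$ and $KI = J$: one computes $\omega_J(Iu,v) = g(JIu,v) = -\omega_K(u,v)$ and $\omega_K(Iu,v) = \omega_J(u,v)$, whence $\Omega(Iu,v) = i\Omega(u,v)$. Closedness is immediate because $\omega_J$ and $\omega_K$ are Kähler forms for the complex structures $J$ and $K$ and therefore closed. Holomorphicity is then automatic: decomposing $d\Omega = \partial\Omega + \bar\partial\Omega$ into components of bidegree $(3,0)$ and $(2,1)$ forces both to vanish, so $\bar\partial\Omega = 0$. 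Finally, non-degeneracy transfers from $\omega_J$, since on a manifold of real dimension $4n$ the top power $\Omega^n \wedge \bar\Omega^n$ is a nonzero multiple of $\omega_J^{2n}$.

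For the converse, the plan is to invoke the Calabi--Yau theorem in the compact setting. Given $(M,g,I)$ Kähler with a holomorphic symplectic form $\Omega$, the top exterior power $\Omega^n$ is a nowhere-vanishing holomorphic section of the canonical bundle, so $c_1(M) = 0$. Yau's solution of the Calabi conjecture then produces a Ricci-flat Kähler metric $g'$ in the prescribed Kähler class. On a Ricci-flat Kähler manifold, any holomorphic form is parallel with respect to the Levi-Civita connection of $g'$; in particular, $\Omega$ is parallel. The existence of a parallel non-degenerate holomorphic $(2,0)$-form reduces the holonomy from $SU(2n)$ to $Sp(n)$, and the parallel almost complex structures $J,K$ arising from this reduction automatically satisfy the quaternionic relations and are integrable, giving the hyperkähler data.

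Concretely, at the algebraic level $J$ and $K$ should be defined by $g(Ju,v) = \textup{Re}\,\Omega(u,v)$ and $g(Ku,v) = \textup{Im}\,\Omega(u,v)$, after fixing a normalization of $\Omega$. One then checks that $J^2 = K^2 = -1$ and $JK = I$ pointwise using the compatibility of $\Omega$ with the Kähler structure, and parallelism of $J, K$ follows from holomorphicity of $\Omega$ combined with the Kähler condition on $g'$.

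The main obstacle is the converse direction: the algebraic construction of $J, K$ from $\Omega$ is straightforward, but promoting these pointwise endomorphisms to integrable, parallel almost complex structures requires the metric to be Ricci-flat, which is the deep input supplied by Yau's theorem. Absent the compactness hypothesis one needs to either postulate parallelism of $\Omega$ at the outset or impose analytic hypotheses guaranteeing the existence of a Ricci-flat representative, and the statement should be interpreted accordingly.
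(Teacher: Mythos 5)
The paper does not prove this proposition; it is cited verbatim from Hitchin, so there is no "paper's proof" to compare against. Evaluated on its own terms, your forward direction is correct and is exactly the standard argument: the type-$(2,0)$ check via $JI=-K$, $KI=J$ giving $\Omega(Iu,v)=i\Omega(u,v)$; closedness from the K\"ahler condition; holomorphicity by bidegree decomposition of $d\Omega$; and non-degeneracy (here the cleanest route is simply that $\iota_u\Omega=0$ forces $\iota_u\omega_J=0$, hence $u=0$, rather than the $\Omega^n\wedge\bar\Omega^n$ detour, but the conclusion is the same).

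The converse is where there is a genuine, if subtle, gap. Your Calabi--Yau argument proves a different statement: on a \emph{compact} K\"ahler manifold carrying a holomorphic symplectic form, there \emph{exists some} hyperk\"ahler metric in each K\"ahler class -- namely the Ricci-flat representative $g'$, for which $\Omega$ becomes parallel and the holonomy drops to $Sp(n)$. But $g'$ is generically not the metric $g$ you started with, so you have not shown that the \emph{given} K\"ahler structure is hyperk\"ahler, which is the natural reading of the sentence as Hitchin states it and as this paper uses it (the proposition is invoked for explicit hyperk\"ahler structures on flat quaternionic vector spaces and their reductions, where Yau's theorem is neither available nor needed). The clean way to make the converse both true and directly applicable here is exactly the alternative you mention in your last paragraph: take the holomorphic symplectic form to be covariantly constant (equivalently, of constant pointwise norm relative to $g$). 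Then defining $J$ and $K$ by $g(Ju,v)=\operatorname{Re}\Omega(u,v)$ and $g(Ku,v)=\operatorname{Im}\Omega(u,v)$ (after normalizing $\Omega$) yields parallel endomorphisms, and $J^2=K^2=-1$, $IJ=K$ follow from the pointwise compatibility, with integrability coming for free from parallelism. You should lead with that argument rather than bury it as a caveat; the Yau route is a theorem of a different flavor (existence in a class, compact case only) and is not a proof of the converse as stated.
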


The 2-sphere of complex structures 
\begin{equation}\label{eq:complexstructuresphere}
    \mathfrak{b}(M,g)=\{aI+bJ+cK: a^2+b^2+c^2=1\}
\end{equation}
consists of the integrable almost complex structures on $M$ that are K\"ahler with respect to the metric $g$. When the Riemannian metric is fixed we drop the reference to the metric for brevity of the notation. Hence for any oriented orthonormal basis $u_1,u_2,u_3$ of $\mathbf{R}^3$, the triple $(I',J',K')$ with \[I':=u_1\cdot (I,J,K),\ J':=u_2\cdot (I,J,K),\ K'=u_3\cdot (I,J,K),\]
give a hyperk\"ahler structure on $(M,g)$ equivalent to the original one (obtained by a rotation). 

The most basic examples of a hyperk\"ahler manifolds are quaternionic vector spaces, including $\slashed{S}$ or $\textup{End}(E,\slashed{S})$ where $E$ is a hermitian vector space. One obtains more such examples via hyperk\"ahler quotients.

\subsection{Hyperk\"ahler reduction}
Consider a hyperk\"ahler manifold $(M,g,I,J,K)$ carrying action of a compact Lie group $G$ \textcolor{black}{preserving} the symplectic forms $\omega_I,\omega_J,\omega_K$. Such an action of $G$ on $M$ is called a \textit{hyperk\"ahler action}.

For any complex structure $S \in \{I, J, K\}$ and any $\xi \in \mathfrak{g}$, by the Cartan's magic formula, we have
$$0=L_{X^M_\xi} \omega_S = d(\iota_{X^M_\xi}\omega_S) + \iota_{X^M_\xi}(d\omega_S),$$
where $\iota_v$ denotes the usual contraction operator of a differential form by a vector field on $M$, and $X^M_\xi$ denotes the \textcolor{black}{Killing} vector field on $M$ associated with $\xi$. Since $\omega_S$ is a symplectic form, the above identity tells us that $\iota_{X^M_\xi}\omega_S$ is a closed $1$-form. Furthermore, if we assume that $H^1_{dR}(M;\mathbf{R})$ is trivial, then every closed $1$-form is exact. Thus, there exists a function
$$\mu^{S}_{X^M_\xi}: M\to \mathbf{R}, \quad d\mu^{S}_{X^M_\xi} = \iota_{X^M_\xi}\omega_S.$$
For this, we may define (\textcolor{black}{since $G$ is compact we may assume that there is an $Ad$-invariant metric on the Lie algebra $\mathfrak{g}$ of $G$}) a map $\mu^S: M \to \mathfrak{g}^*$ where 
$$\la \mu^S(m), \xi\ra = \mu^S_{X^M_\xi}(m).$$
We can combine each of the moment maps defined above as
$$\color{black}\mu = \mu^I i + \mu^J j + \mu^K k : M \to \mathfrak{g}^* \otimes \text{Im}\mathbf{H} \cong \mathfrak{g}^* \otimes \mathfrak{su}(2).$$\textcolor{black}{Here, $\{i, j, k\}$ denotes the standard bases of the purely imaginary quaternions $\text{Im}\mathbf{H}$}. Note that by construction, such a map $\mu$ is $G$-equivariant. This prompts the following definition.

\begin{Def}\label{hyperkahlermoment}
    A \textit{hyperk\"ahler moment map} of a hyperk\"ahler action $G\curvearrowright M$ is a map $\mu: M\to \mathfrak{g}^* \otimes \mathfrak{su}(2)$ such that
    \begin{enumerate}
        \item $\mu$ is $G$-equivariant,
        \item \textcolor{black}{$d\mu = \iota_{X^M_\xi}\omega$, for all $\xi \in \mathfrak{g}$ and $\omega$ is the hyperk\"ahler form associated with the hyperk\"ahler structure on $M$. Here we view $\omega$ as the triple of hyperk\"ahler forms previously defined, now interpreted as an $\mathfrak{su}(2)$-valued form.}
    \end{enumerate}
    If a hyperk\"ahler $G$-action possesses a hyperk\"ahler moment map $\mu$, we say that $G\curvearrowright M$ is a \textit{tri-Hamiltonian action}.
\end{Def}

\textcolor{black}{\begin{Rem}\label{r:standardmu}
    The notation $\mu$ used to describe a hyperk\"ahler moment map on $G$-hyperk\"ahler manifold $M$ is standard. However, in situation where $M := \text{Hom}(E, \mathbf{H})$ and $G= U(1)$, where $E$ is some $n$-dimensional hermitian vector space, we consider a particular moment map that coincides with the quadratic map that appears in (generalized) Seiberg-Witten theory (cf. Example \ref{ex:mainhyperkahlerquotient}). Thus, with abuse of notation, we use the same label $\mu$ for such a moment map on $\text{Hom}(E, \mathbf{H})$. Of course, this is a very specific phenomenon that happens for certain \textit{linear} hyperk\"ahler manifolds, and not all hyperk\"ahler $U(1)$-manifold will have a moment map arise in this manner.
\end{Rem}}

To get a new hyperk\"ahler manifold from an old one, we suppose that there is a tri-Hamiltonian action of $G$ and consider the associated hyperk\"ahler moment map
\[\mu:M\to \mathfrak{g}^*\otimes\mathfrak{su}(2).\]
\begin{Th}\cite{hitchin1987hyperkahler}
    If $c\in \mathfrak{g}^*\otimes\mathfrak{su}(2)$ is an invariant regular value, then the quotient $\mu^{-1}(c)/G$ is a hyperk\"ahler manifold of dimension $\dim(M)-4\dim(G).$
\end{Th}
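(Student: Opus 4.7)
The plan is to proceed in three stages: establish the smooth structure of the quotient, transport the hyper-Hermitian data to it via an explicit horizontal complement, and verify the K\"ahler conditions. First, since $c$ is a regular value of $\mu$, the level set $\mu^{-1}(c)$ is an embedded submanifold of $M$ of codimension $3\dim G$, because the target $\mathfrak{g}^*\otimes\mathfrak{su}(2)$ has real dimension $3\dim G$. The $G$-invariance of $c$ combined with the equivariance of $\mu$ ensures that $G$ preserves $\mu^{-1}(c)$; under the standing assumption that the action on this level set is free and proper, the quotient $\mu^{-1}(c)/G$ is a smooth manifold of dimension $\dim M - 3\dim G - \dim G = \dim M - 4\dim G$.

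Next, I would pin down the tangent space of the quotient. At a point $m\in\mu^{-1}(c)$, the formulas $\omega_S(u,v)=g(Su,v)$ and $\iota_{X_\xi^M}\omega_S=d\mu_\xi^S$ give
\[
T_m\mu^{-1}(c) = \bigl(I(\mathfrak{g}\cdot m) + J(\mathfrak{g}\cdot m) + K(\mathfrak{g}\cdot m)\bigr)^{\perp}.
\]
Taking the $g$-orthogonal complement of the orbit direction inside this level set produces the horizontal subspace
\[
H_m := \bigl(\mathfrak{g}\cdot m \oplus I(\mathfrak{g}\cdot m) \oplus J(\mathfrak{g}\cdot m) \oplus K(\mathfrak{g}\cdot m)\bigr)^{\perp}.
\]
The sum inside the parentheses is stable under $I$, $J$, $K$ by the quaternion relations, so $H_m$ is as well. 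Under the canonical identification $T_{[m]}(\mu^{-1}(c)/G)\simeq H_m$, and using that $G$ acts by isometries preserving each $\omega_S$, the metric and the triple $(I,J,K)$ descend to an almost hyper-Hermitian structure $(\bar g,\bar I,\bar J,\bar K)$ on the quotient.

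Finally, each $\omega_S|_{\mu^{-1}(c)}$ is $G$-invariant and basic, since $\iota_{X_\xi^M}\omega_S|_{\mu^{-1}(c)} = d\mu_\xi^S|_{\mu^{-1}(c)} = 0$, so it descends to a closed 2-form $\bar\omega_S = \bar g(\bar S\,\cdot,\cdot)$ on $\mu^{-1}(c)/G$. The main obstacle, and the most delicate point of the theorem, is the integrability of $\bar I, \bar J, \bar K$; the direct Newlander--Nirenberg verification is unpleasant, so I would attack it one complex structure at a time via the complex-geometric picture. With respect to $I$, the combination $\mu_{\mathbb{C}} := \mu^J + i\mu^K$ is a holomorphic moment map for the holomorphic symplectic form $\Omega = \omega_J + i\omega_K$ of Proposition \emph{loc.\ cit.}, so its regular level set is a complex submanifold of $(M,I)$. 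Intersecting with the real level of $\mu^I$ and quotienting by $G$ then presents $\mu^{-1}(c)/G$ as a standard K\"ahler reduction, establishing integrability of $\bar I$ together with the K\"ahler identity $d\bar\omega_I=0$. Running the identical argument for $J$ and $K$ yields integrable K\"ahler structures $\bar J,\bar K$ compatible with the same metric $\bar g$, and the quaternion relations transported from $H_m$ complete the identification of $\mu^{-1}(c)/G$ as a hyperk\"ahler manifold of the claimed dimension.
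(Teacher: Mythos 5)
Your proof is correct and follows the standard argument of Hitchin--Karlhede--Lindstr\"om--Ro\v{c}ek, which is exactly what the paper cites here without reproducing a proof: regular value theorem plus (local) freeness for the smooth quotient, the identification of the tangent space to the level set as the orthogonal complement of $I(\mathfrak{g}\cdot m)+J(\mathfrak{g}\cdot m)+K(\mathfrak{g}\cdot m)$, the quaternion-invariance of the horizontal complement, basic closed forms descending, and integrability of each $\bar S$ via K\"ahler reduction of the holomorphic symplectic level set $\mu_{\mathbf{C}}^{-1}(c_{\mathbf{C}})$. One small point worth flagging, though it does not affect correctness: you should restrict to a neighborhood of $\mu^{-1}(c)$ when invoking the complex submanifold structure of $\mu_{\mathbf{C}}^{-1}(c_{\mathbf{C}})$, since regularity of $c$ for $\mu$ only guarantees that $c_{\mathbf{C}}$ is a regular value of $\mu_{\mathbf{C}}$ along $\mu^{-1}(c)$ itself, not globally on the holomorphic level set.
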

When the choice of the invariant regular value $c$ is clear, we simply denote by $M\ssslash G$ the hyperk\"ahler reduction $\mu^{-1}(c)/G$. Denote by $\mathcal{L}\subset TM$ the subbundle spanned by the Killing vector field of the action of $G$. So, the fibers of $\mathcal{L}$ are isomorphic to the Lie algebra $\mathfrak{g}$. We will need the following lemma in the subsequent sections:
\begin{Lemma}\label{l:decompositionofinverseimage}\cite{hitchin1987hyperkahler}
    One has the following $G$-equivariant orthogonal decomposition:
    $$TM|_{\mu^{-1}(c)}=\mathcal{H}\oplus \mathcal{L}\oplus I\mathcal{L}\oplus J\mathcal{L}\oplus K\mathcal{L}$$
    where $\mathcal{H}$ is a vector bundle whose fiber isometrically isomorphic to tangent spaces of $M\ssslash G$ under the quotient map $\tau:M\to M\ssslash G$ and one also has
    \[T\mu^{-1}(c)=\mathcal{H}\oplus\mathcal{L}.\]
\end{Lemma}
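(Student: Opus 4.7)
The plan is to use the moment map equation together with the $G$-equivariance of $\mu$ at the invariant level $c$ to pin down each of the five summands. First I would identify the tangent bundle of the level set. For any $S\in\{I,J,K\}$ and $\xi\in\mathfrak{g}$, the moment map equation reads
\[(d\mu^S_\xi)(v) = \omega_S(X^M_\xi,v) = g(SX^M_\xi,v),\]
so $v\in \ker d\mu^S$ if and only if $v$ is orthogonal to $S\mathcal{L}$. Consequently
\[T\mu^{-1}(c) = (I\mathcal{L})^{\perp}\cap (J\mathcal{L})^{\perp}\cap (K\mathcal{L})^{\perp}.\]

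Next I would verify that the four subbundles $\mathcal{L}, I\mathcal{L}, J\mathcal{L}, K\mathcal{L}$ are pairwise orthogonal along $\mu^{-1}(c)$. Because each complex structure is a $g$-isometry squaring to $-1$, one has $g(S_1U,S_2V)=-g(U,S_1S_2V)$ for $S_1,S_2\in\{I,J,K\}$; since $S_1S_2\in\{\pm I,\pm J,\pm K\}$ when $S_1\ne S_2$, all the pairwise orthogonalities reduce to the single family $\mathcal{L}\perp S\mathcal{L}$. To establish these, combine the moment map equation with antisymmetry of $\omega_S$ to obtain $g(X^M_\xi,SX^M_\eta) = d\mu^S_\eta(X^M_\xi)$, and then differentiate the equivariance identity $\mu\circ L_g = \mathrm{Ad}^*(g)\circ \mu$ at a point $p\in\mu^{-1}(c)$ to conclude $d\mu_p(X^M_\xi) = -\mathrm{ad}^*(\xi)c$. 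Invariance of $c$ makes this vanish, which also shows $\mathcal{L}\subset T\mu^{-1}(c)$.

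Now define $\mathcal{H}$ to be the orthogonal complement of $\mathcal{L}$ inside $T\mu^{-1}(c)$. By construction $T\mu^{-1}(c)=\mathcal{H}\oplus\mathcal{L}$, and since $T\mu^{-1}(c)$ has already been shown orthogonal to $I\mathcal{L},J\mathcal{L},K\mathcal{L}$, so is $\mathcal{H}$. Regularity of $c$ guarantees the $G$-action is locally free on $\mu^{-1}(c)$, so each of $\mathcal{L}, I\mathcal{L}, J\mathcal{L}, K\mathcal{L}$ has rank $\dim\mathfrak{g}$ and $\mathcal{H}$ has rank $\dim M - 4\dim\mathfrak{g}$; a dimension count then shows the five pieces fill out $TM|_{\mu^{-1}(c)}$. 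The identification of $\mathcal{H}$ with the tangent of the quotient is then formal: the quotient map $\tau\colon\mu^{-1}(c)\to M\ssslash G$ is a principal $G$-bundle with vertical distribution $\mathcal{L}$, so $d\tau|_{\mathcal{H}}$ is a fiberwise $G$-equivariant isomorphism, and the quotient hyperk\"ahler metric is defined precisely so that this is an isometry.

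The only genuinely non-formal step is the equivariance computation: recognizing that $\mathrm{Ad}^*(\exp t\xi)c = c$ forces $d\mu(X^M_\xi)$ to vanish along $\mu^{-1}(c)$. Once this is in place, both the containment $\mathcal{L}\subset T\mu^{-1}(c)$ and the mutual orthogonality statements fall out simultaneously, and the rest of the lemma is linear algebra together with a rank count.
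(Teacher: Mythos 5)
Your proof is correct. The paper itself cites this lemma to \cite{hitchin1987hyperkahler} without supplying an argument, and your reconstruction is essentially the standard one from that reference: the moment map identity $d\mu^S_\xi(\cdot)=g(SX^M_\xi,\cdot)$ identifies $T\mu^{-1}(c)$ with $(I\mathcal{L}\oplus J\mathcal{L}\oplus K\mathcal{L})^\perp$, skew-adjointness of $I,J,K$ reduces all six pairwise orthogonalities to the three of the form $\mathcal{L}\perp S\mathcal{L}$, and these plus the containment $\mathcal{L}\subset T\mu^{-1}(c)$ follow from differentiating equivariance at the invariant value $c$ (up to a sign-convention choice for $\mathrm{ad}^*$ which does not affect the vanishing). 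The local-freeness from regularity and the rank count complete the decomposition, and the isometry $\mathcal{H}\simeq\tau^*T(M\ssslash G)$ is definitional.
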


\begin{Ex}\label{ex:mainhyperkahlerquotient}
The hyperk\"ahler manifold $M:=\textup{Hom}(E,\mathbf{H})\cong \mathbf{H}\otimes E$, where $E$ is an $n$-dimensional hermitian vector space, comes with a $U(1)$-moment map associated with a $U(1)\curvearrowright M$ induced by the left multiplication of $U(1)$ on $\mathbf{H}$
\[\mu:\textup{Hom}(E,\mathbf{H})\to \mathfrak{u}(1)\otimes\mathfrak{su}(2)\]
\begin{equation}\label{eq:basicmomentmap}
    \color{black}\psi\mapsto \psi\psi^*-\frac{1}{2}|\psi|^2\textup{id}_{\mathbf{H}}.
\end{equation}
\textcolor{black}{Here, we view $\psi^*: \mathbf{H} \to E$ so that $\psi \psi^* : \mathbf{H} \to \mathbf{H}$, which is self-adjoint endomorphism of $\mathbf{H}$. By removing the trace part $\psi\psi^*$, $\mu(\psi)$ a self-adjoint traceless endomorphism of $\mathbf{H} \cong \mathbf{C}^2$. Note that $\mathfrak{su}(2)$ consists of only skew-adjoint traceless endomorphism of $\mathbf{C}^2$. Since $\mathfrak{u}(1)\otimes \mathfrak{su}(2) \cong i\mathfrak{su}(2)$, $\mathfrak{u}(1)\otimes \mathfrak{s}(2)$ contains only self-adjoint traceless endomorphisms of $\mathbf{C}^2$. Then, it is clear that $\mu(\psi)$ can be viewed as an element of $\mathfrak{u}(1)\otimes \mathfrak{su}(2)$. The fact that $\mu$ is a hyperk\"ahler moment map can be checked by direct computations, appealing to Definition \ref{hyperkahlermoment}.}

Although $0$ is a critical point of $\mu$, the set \textcolor{black}{$\mu^{-1}(0)\backslash\{0\}$} consist of only regular points. Hence one has the hyperk\"ahler reduction 
\begin{equation}\label{eq:framedchargedoneinstantons}
    \Big(\textup{Hom}(E,\mathbf{H})\backslash\{0\}\Big)\ssslash U(1):=\Big(\mu^{-1}(0)\backslash\{0\}\Big)/U(1).
\end{equation}
It follows from the ADHM construction \cite{atiyah1994construction,donaldson1997geometry} that
the hyperk\"ahler quotient \eqref{eq:framedchargedoneinstantons} 
can be identified with the  moduli space $\mathcal{M}_{\textup{fr}}(1,n)$ of framed charge one $\SU(n)$-instantons on $\mathbf{R}^4$ which is a  $4(n-1)$-dimensional  non-compact \textcolor{black}{hyperk\"ahler}  manifold. The action of $\SPN(3)$ on $\mathbf{H}$ and the action of $\SU(n)$ on $E$ induce an action of the Lie group $\SU(n)\times \SPN(3)$ on $\mathcal{M}_{\textup{fr}}(1,n)$. 
\end{Ex}

\subsection{Hyperk\"ahler bundles} The hyperk\"ahler structure gives a parallel action of $\IMH$ on the tangent bundle $M$:
\[\color{black}\gamma:M\times\IMH\to\textup{End}(TM)\]
\[(x,ai+bj+ck)\mapsto aI_x+bJ_x+cK_x\]
which satisfies the Clifford relations: 
\[\gamma(v)^2+|v|^2=0.\]
(Note that $\gamma$ can be equivalently given by an isometry between the unit sphere $\textup{S}(\IMH)$ and \textcolor{black}{the fibers of $\mathfrak{b}(M)$, the bundle of 2-spheres of complex structures on the tangent spaces.}) Hence, one obtains the Clifford action
\begin{equation}\label{eq:cliffactionhyperkahler}
    \gamma:M\times \Cl(3)\to\textup{End}(TM)
\end{equation}
that is parallel with respect to the Riemannian metric.
Conversely, any parallel Clifford action \eqref{eq:cliffactionhyperkahler} is given by an oriented isometry 
\[\gamma:M\times\textup{S}(\IMH)\to \mathfrak{b}(M).\]

This observation extends to hyperk\"ahler bundles over 3-manifolds due to Taubes \cite{taubes1999nonlinear}. Fix an oriented Riemannian 3-manifold $(Y,g_Y)$ which is always parallelizable. We then can write
\[\color{black}TY\simeq T^*Y\simeq P_{\SO}\times_{\SO(3)}\IMH.\]
Here $P_{\SO}$ is the bundle of oriented orthonormal frames of $Y$.

\begin{Def}\label{def:hyperkahlerbundlecliff}
    By a \emph{hyperk\"ahler bundle} over $Y$ we mean a fiber bundle $\pi:\mathbb{M}\to Y$ with fiber a hyperk\"ahler manifold $(M,g,I,J,K)$ with an isometry of bundles \textcolor{black}{$\gamma:\textup{S}(T^*Y)\buildrel\simeq\over\to\mathfrak{b}(\mathbb{M})$}. Here \textcolor{black}{$\textup{S}(T^*Y)$} is the unit tangent bundle and $\mathfrak{b}(\mathbb{M})$ is defined fiberwise as in \eqref{eq:complexstructuresphere}. By the discussion above we then obtain an action
    \[\color{black}\gamma:\pi^*T^*Y\to \textup{End}(\mathcal{V}\mathbb{M})\]
    that extends to a bundle map {$\gamma:\pi^*\Cl(T^*Y)\to \textup{End}(\mathcal{V}\mathbb{M})$}, \textcolor{black}{where $\Cl(T^*Y)$ is the bundle of Clifford algebras associated with Riemannian structure.}  Equivalently, one obtains a section of the tensor bundle
    \begin{equation}\label{eq:cliffordsection}
        \gamma\in\Gamma(\pi^*TY\otimes \textup{Hom}(\mathcal{V}\mathbb{M}))
    \end{equation}
    \textcolor{black}{which is parallel along the fibers of $\mathbb{M}$.}
\end{Def}

In this paper, the main examples of hyperk\"ahler bundles are associated bundles. For some compact Lie group $\mathscr{K}$, a principal $\mathscr{K}$-bundle $P$, and a hyperk\"ahler manifold $M$ with a $\mathscr{K}$-action, we form the hyperk\"ahler bundle $\mathbb{M}:=P\times_{\mathscr{K}} M$. A special case treated here is for $$M=\textup{Hom}(\mathbf{C}^n,\mathbf{H})\backslash\{0\} \  \textup{or} \ \Big(\textup{Hom}(\mathbf{C}^n,\mathbf{H})\backslash\{0\}\Big)\ssslash U(1),$$ $$\color{black}\mathscr{K}=\SO(3)\times \SPN(3),$$ and $P$ is a principal $\color{black}\SO(3)\times \SPN(3)$-bundle.


\subsection{The Fueter operator}
Let $\mathbb{M}\to Y$ be a hyperk\"ahler bundle and fix \textcolor{black}{an Ehresmann connection on it.} For a section $\phi\in\Gamma(\mathbb{M})$, we can pull back the Clifford section \eqref{eq:cliffordsection} to obtain:
\begin{equation}\label{eq:pullbackcliffordsection}
    \color{black}\phi^*\gamma\in\Gamma(Y,TY\otimes\phi^*\textup{End}(\mathcal{V}\mathbb{M})).
\end{equation}
Then we may compose this pullback section with the covariant derivative $\nabla\phi\in\Omega^1(Y,\phi^*\mathcal{V}\mathbb{M})$ to obtain the \emph{Fueter operator} $\mathfrak{F}$:
\begin{equation}\label{eq:fueteroperator}
   \mathfrak{F}\phi:=\phi^*\gamma \circ \nabla\phi\in\Gamma(Y,\phi^*\mathcal{V}\mathbb{M}). 
\end{equation}

    The Fueter operator is a nonlinear generalization of Dirac operators first introduced in \cite{taubes1999nonlinear}. When $N$ is a representation of the Clifford algebra $\Cl(3)$, one obtains a Clifford module $\mathbb{N}:=P_{\SPNC}\times_{\SPNC(3)}N$. Since $\mathbb{N}$ is a vector bundle, for any section $\phi\in \Gamma(\mathbb{N})$, one has the natural identification $\phi^*\mathcal{V}\mathbb{N}\simeq \mathbb{N}$, and hence the Fueter operator is the usual Dirac operator
    \[\mathfrak{F}\phi\in \Gamma (\mathbb{N}).\]

We can say a bit about the linearization of the Fueter operator. Let $\phi\in\Gamma(Y,\mathbb{M})$ be section of the hyperk\"ahler bundle. The pullback of the vertical bundle gives a \emph{Clifford module} 
\begin{equation}\label{eq:linearizedhyperkahler}
    \phi^*\mathcal{V}\mathbb{M}\to  Y.
\end{equation}
To see why this bundle is a Clifford module, first note that the action of quaternions on $\mathcal{V}\mathbb{M}$ can be interpreted as an action of the tangent bundle $TY\simeq Y\times \IMH$. Hence, one obtains a linear Dirac operator $D^{\phi}$ on the Clifford module \eqref{eq:linearizedhyperkahler}.

 \textcolor{black}{The following proposition is well-known. However, we could not pinpoint an exact reference to a proof of it in the literature. Below, we give a detailed proof of the statement.}

\begin{Prop}\label{p:linearizedfueter}
    \textcolor{black}{The linearization $L_{\phi}\mathfrak{F}$ of the Fueter operator \eqref{eq:fueteroperator} at $\phi$ is given by $D^{\phi}$. In particular, the linearization is elliptic.} 
\end{Prop}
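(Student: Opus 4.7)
The plan is to choose a smooth variation $\phi_s$ with $\phi_0=\phi$ and $\partial_s|_{s=0}\phi_s=\eta\in\Gamma(Y,\phi^*\mathcal{V}\mathbb{M})$, and compute $L_\phi\mathfrak{F}(\eta)=\partial_s|_{s=0}\mathfrak{F}(\phi_s)$ by applying the product rule to $\mathfrak{F}(\phi_s)=\phi_s^*\gamma\circ\nabla\phi_s$. This produces two contributions. The first, $\bigl(\partial_s|_{s=0}\phi_s^*\gamma\bigr)\circ\nabla\phi$, vanishes because $\gamma$ is parallel: the almost complex structures $I,J,K$ are parallel on each hyperk\"ahler fiber, and by Definition \ref{def:hyperkahlerbundlecliff} the Clifford section $\gamma\in\Gamma(\mathbb{M},\pi^*TY\otimes\textup{End}(\mathcal{V}\mathbb{M}))$ is covariantly constant with respect to the natural connection on $\mathcal{V}\mathbb{M}$ induced by the Ehressman structure and the fiberwise Levi-Civita connection. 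Thus $L_\phi\mathfrak{F}(\eta)=\phi^*\gamma\circ\bigl(\partial_s|_{s=0}\nabla\phi_s\bigr)$.

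For the second step, I would identify $\partial_s|_{s=0}\nabla\phi_s$ with a covariant derivative $\nabla^\phi\eta$ on the pullback vector bundle $\phi^*\mathcal{V}\mathbb{M}$. Using the vertical exponential map, a tubular neighborhood of $\phi(Y)\subset\mathbb{M}$ is identified with a neighborhood of the zero section of $\phi^*\mathcal{V}\mathbb{M}\to Y$, and the Ehressman connection on $\mathbb{M}$ restricts along this zero section to a well-defined linear connection $\nabla^\phi$. The cleanest implementation is in the associated-bundle setting $\mathbb{M}=P\times_K M$: representing $\phi$ by a $K$-equivariant map $\tilde{\phi}:P\to M$ and its variation $\eta$ by an equivariant $\tilde{\eta}$ covering $\tilde{\phi}$, the horizontal differentials satisfy $d\tilde{\phi}_s|_{\mathcal{H}P}=d\tilde{\phi}|_{\mathcal{H}P}+s\,d\tilde{\eta}|_{\mathcal{H}P}+O(s^2)$, and the linear term descends to $\nabla^\phi\eta\in\Omega^1(Y,\phi^*\mathcal{V}\mathbb{M})$. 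This is the formalism that the appendix will make fully explicit.

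Combining the two steps yields $L_\phi\mathfrak{F}(\eta)=\phi^*\gamma\circ\nabla^\phi\eta$. Since $\phi^*\gamma$ endows $\phi^*\mathcal{V}\mathbb{M}$ with the structure of a Clifford module for $\textup{Cl}(TY^*)$ and $\nabla^\phi$ is the natural compatible connection, the right-hand side is by definition the Dirac operator $D^\phi$ on this pullback Clifford module. Ellipticity is then immediate: the principal symbol $\sigma_\xi(D^\phi)=\phi^*\gamma(\xi)$ satisfies $\sigma_\xi^2=-|\xi|^2\,\textup{id}$ by the Clifford relation and is therefore invertible for $\xi\ne 0$. The main technical obstacle is the rigorous identification in the second step, namely showing that $\partial_s|_{s=0}\nabla\phi_s$ really assembles into a well-defined linear connection on $\phi^*\mathcal{V}\mathbb{M}$ independent of the choice of tubular neighborhood; the associated-bundle viewpoint is essential here, since the principal connection on $P$ furnishes a canonical horizontal distribution that removes any ambiguity in the trivialization and simultaneously guarantees compatibility of $\nabla^\phi$ with the Clifford action $\phi^*\gamma$.
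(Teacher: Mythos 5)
Your proposal is correct and takes essentially the same approach as the paper: split the variation of $\mathfrak{F}(\phi_s)=\phi_s^*\gamma\circ\nabla\phi_s$ into two terms, observe the $\gamma$-term drops out because $\gamma$ is parallel, and identify $\partial_s|_{s=0}\nabla\phi_s$ with a linear connection $\nabla^\phi$ on $\phi^*\mathcal{V}\mathbb{M}$ so that $L_\phi\mathfrak{F}=\phi^*\gamma\circ\nabla^\phi=D^\phi$. The paper constructs $\nabla^\phi$ directly via parallel transport along fiber curves and checks the Leibniz rule, whereas you sketch the tubular-neighborhood/vertical-exponential identification and defer the associated-bundle/connector details to the appendix, which is exactly where the paper makes that step rigorous.
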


\begin{proof}
We first describe the induced connection on the Clifford module $\phi^*\mathcal{V}\mathbb{M}$. We may identify the tangent space $T_{\phi}\Gamma(Y,\mathbb{M})$ with $\Gamma(Y,\phi^*\mathcal{V}\mathbb{M})$. Hence the linearization of the connection $\nabla$ at $\phi$ gives a map
\[\nabla^{\phi}:=L_{\phi}\nabla:\Gamma(Y,\phi^*\mathcal{V}\mathbb{M})\to \Gamma(Y,TY^*\otimes \phi^*\mathcal{V}\mathbb{M})\]
\[\psi\mapsto \frac{d}{dt}|_{t=0}P_{\phi_t}\nabla \phi_t\]
where $\phi_t\in \Gamma(T,\mathbb{M})$ is a curve of sections \textcolor{black}{with $\phi_0=\phi$} such that $\frac{d}{dt}|_{t=0}\phi_t=\psi$ and $P_{\phi_t}:\phi_t^{*}\mathcal{V}\mathbb{M}\to \phi_0^*\mathcal{V}\mathbb{M}$ is the bundle isomorphism given, for every $y\in Y$, by the parallel transport along the curve $s\mapsto \phi_s(y)$ in $\mathbb{M}_y$ via the Levi-Civita connection on the fiber $\mathbb{M}_y$.

The linearization $\nabla^{\phi}$ gives a connection on the vector bundle $\phi^*\mathcal{V}\mathbb{M}\to Y$. To see why this is a connection, take $\psi\in\Gamma(Y,\phi^*\mathcal{V}\mathbb{M})$ which can be represented by a curve of sections $\phi_t\in\Gamma(Y,\mathbb{M})$, i.e., \textcolor{black}{$\frac{\partial}{\partial t}\phi_t|_{t=0}=\psi$.} We need to show $\nabla^{\phi}f\cdot\psi=f\nabla^{\phi}\psi+df\cdot \psi$ for $f\in C^{\infty}(Y)$. First, note the ``tangent vector'' $f\cdot\psi$ corresponds to the curve $\phi_{f\cdot t}$ \textcolor{black}{given by $t\mapsto\phi_{f(y)t}(y)$ for each fixed $y\in Y$}. So, we may write
\[\nabla^{\phi}f\cdot \psi=\frac{d}{dt}|_{t=0}P_{\phi_{f.t}}\nabla \phi_{f\cdot t}=\frac{d}{dt}|_{t=0}(P_{\phi_{f.t}}{\pi^v d(\phi_{f\cdot t}}))\]
\[=\frac{d}{dt}|_{t=0}(P_{\phi_{f.t}}\pi^v(d\phi)|_{f\cdot t}+t\cdot df\cdot P_{\phi_{f.t}}\pi^v\frac{\partial}{\partial s}|_{s=f\cdot t}\phi_s)=f\nabla^{\phi}\psi+df\cdot\psi.\]
    Here $\pi^v:T\mathbb{M}\to\mathcal{V}\mathbb{M}$ is the projection along the horizontal bundle associated with the connection onto the vertical bundle, \textcolor{black}{$\nabla=\pi^v\circ d$, and  $d\phi|_{f.t}$ is given by $t\mapsto d\phi_s(y)|_{s=f.t}$ for each fixed $y\in Y$. The equality from the first to the second line follows from the equality $d(\phi_{f.t})=d\phi|_{f.t}+t.df.\frac{\partial}{\partial s}|_{s=f.t}\phi_s$.}

\textcolor{black}{To see why $L_{\phi}(\mathfrak{F})=D^{\phi}$, first note that the Dirac operator $D^{\phi}$ on $\phi^*\mathcal{V}\mathbb{M}$ is defined by
\[D^{\phi}=\phi^*\gamma\circ \nabla^{\phi}:\Gamma(Y,\phi^*\mathcal{V}\mathbb{M})\to \Gamma(Y,\phi^*\mathcal{V}\mathbb{M}).\]}
\textcolor{black}{Since $\gamma$ is parallel along fibers of $\mathbb{M}$, we have $P_{\phi_t}\phi_t^*\gamma P_{\phi_t}^{-1}=\phi_0^*\gamma$. So, we may write
\[L_{\phi}(\mathfrak{F})\psi=\frac{d}{dt}|_{t=0}P_{\phi_t}\mathfrak{F}\phi_t=\frac{d}{dt}|_{t=0}P_{\phi_t}\phi_t^*\gamma\circ \nabla\phi_t=\frac{d}{dt}|_{t=0}\big(P_{\phi_t}\phi_t^*\gamma P_{\phi_t}^{-1} \circ P_{\phi_t}\nabla\phi_t\big)\]
\[=\frac{d}{dt}|_{t=0}\big(\phi_0^*\gamma \circ P_{\phi_t}\nabla\phi_t\big)=\phi^*\gamma\circ L_{\phi}\nabla\psi=D^{\phi}\psi,\]}
as claimed.
\end{proof}

\begin{Rem}\label{r:symbolofDirac}
    The connection $\nabla^{\phi}$ on $\phi^*\mathcal{V}\mathbb{M}\to Y$ does not need to be a Clifford connection \cite{berline2003heat}, so the Dirac operator $D^{\phi}$ does not share many of the geometric properties of the classical Dirac operators (such as the Lichnerowicz-Weitzenb\"ock formula, etc.). However, its symbol is still given by the Clifford multiplication by the corresponding covectors.
\end{Rem}

\section{An aquaternionic moduli space}\label{s:aquaternionicmodulispace}

In this section, we fix the following notations:
\[{M}:=\textup{Hom}(\mathbf{C}\otimes \IMH,\bf{H})\backslash\{0\},\]
\textcolor{black}{which is hyperk\"ahler manifold with an $U(1)$-equivariant moment map $\mu:M\to i\mathfrak{su}(2)$ for which $0$ is a regular value (cf. Example \ref{ex:mainhyperkahlerquotient}). We then denote by $M^{\mu}$ the inverse image $\mu^{-1}(0)\backslash\{0\}$ and the hyperk\"ahler quotient by }
\begin{equation}{\label{eq:hyperkahlerquotient}
\color{black}{M}_0:=M^{\mu}/U(1).  }  
\end{equation}

\subsection{$\SO(3)\times\SO(3)$ action}
The \textcolor{black}{set-up in \eqref{eq:hyperkahlerquotient}} was crucial in the description of the noncompactness theorem for the multi-spinor Seiberg-Witten equation \cite{haydys2015compactness} in terms of nonlinear harmonic spinors. \textcolor{black}{We have an action of  $\textup{SO}(3)$ on $\mathbf{C}\otimes \IMH$ where $\SO(3)$ acts on the $\IMH$-factor.} Also, $\textup{Spin}^c(3)$ acts on $\mathbf{H}$ by the restricting the action of complexified Clifford algebra $\Cl(3)\otimes \mathbf{C}$ to the $\textup{Spin}^c(3)$ group. Therefore, we have an action of $ \textup{SO}(3)\times\textup{Spin}^c(3)$ on $\textup{Hom}(\mathbf{C}\otimes \IMH,\mathbf{H})$ with respect to which the submanifolds $M$ and $M^{\mu}=\mu^{-1}(0)\backslash\{0\}$ are invariant. So the hyperk\"ahler quotient $M_0=M^{\mu}/U(1)$ carries an action of \textcolor{black}{ $\textup{SO}(3)\times\textup{SO}(3)\simeq \big( \textup{SO}(3)\times\textup{Spin}^c(3)\big)/U(1)$.}


\subsection{An aquaternionic quotient}

\textcolor{black}{As in Section \ref{ss:3/2-spionors}, we have an orthogonal decomposition
\[\textup{Hom}(\mathbf{C}\otimes \IMH,\mathbf{H})=\iota(\mathbf{H})\oplus W\]
where $\iota:\bf{H}\to \textup{Hom}(\mathbf{C}\otimes \IMH,\mathbf{H})$ is an embedding and $W$ is the vector space of $3/2$-spinors given as the kernel of the Clifford action. This decomposition is}
preserved by the diagonal action of $\SPN^c(3)$(\textcolor{black}{$\buildrel{\textup{diag.}}\over\hookrightarrow\SPN^c(3)\times\SPN^c(3)$}). Denote the image of $W^{\mu}:=W\cap\Big(\mu^{-1}(0)\backslash\{0\}\Big)$ under the projection $M\to M_0$ by $W_0$ which carries an action of $\SO(3)$. We call $W_0$ the \textcolor{black}{\emph{aquaternionic} moduli space of \textcolor{black}{$\SU(3)$-instantons}, and we will show this is a 4-dimensional manifold. The reason for calling this space `aquaternionic' is that it is formed by applying the hyperk\"ahler quotient to the non-hyperk\"ahler submanifold $W$, which, as a result, $W_0$ does not carry a hyperk\"ahler structure. The proposition below provides a local description of $W_0$, but in the future, we would like to understand more about the global properties of this moduli space.}

\begin{Prop}\label{p:w_0ismanifold}
    The moduli space $W_0$ is a 4-dimensional submanifold of $M_0$.
\end{Prop}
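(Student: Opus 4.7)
The plan is to analyze $W^\mu := W \cap (\mu^{-1}(0) \setminus \{0\})$ as a smooth submanifold of $W$ and then descend to the quotient. The main obstacle is the key regularity step: $W$ is \emph{not} a quaternionic subspace of $M$ --- the complex structures $I, J, K$ do not preserve $\ker c$ --- so the standard hyperk\"ahler reduction argument does not apply, and the surjectivity of $d\mu|_W$ must be established by a direct computation rather than formally.

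I would first record two preliminary facts. Clifford multiplication $c$ is $U(1)$-equivariant (the central $U(1) \subset \spinc(3)$ commutes with the Clifford action of $\SPN(3)$), so $W = \ker c$ is $U(1)$-invariant. A dimension count using $c \circ \iota = -3 \cdot \textup{id}$ gives $\dim_{\mathbf{R}} W = 12 - 4 = 8$. In particular, $\mu|_W: W \to i\mathfrak{su}(2)$ is a well-defined $\SPN(3)$-equivariant quadratic map, and the expected dimension of $W^\mu$ is $8 - 3 = 5$.

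The central step is to verify that $0$ is a regular value of $\mu|_W$ on $W \setminus \{0\}$. I would do this by a direct computation at a convenient representative, for instance $\phi_0 := I^* \otimes k + J^* \otimes 1 \in W$ (which satisfies $\phi_0 \phi_0^* = \textup{id}_{\mathbf{H}}$ and hence $\mu(\phi_0) = 0$). Parameterising $T_{\phi_0}W$ by the linear constraint $v(I) = k\cdot v(J) - j\cdot v(K)$ and computing $d\mu_{\phi_0}|_W$ at the matrix level shows that the three real coordinates of the output in $i\mathfrak{su}(2)$ can be controlled independently by varying $v(J)$ and $v(K)$, yielding surjectivity. Regularity then propagates to the rest of $W^\mu$ via the $\SPN(3) \times U(1) \times \mathbf{R}_+$-equivariance of $\mu|_W$: this symmetry group has real dimension $3 + 1 + 1 = 5 = \dim W^\mu$ and acts locally freely at $\phi_0$ (the stabiliser is finite by a direct check), so its orbit is open in $W^\mu$, and by openness of the regularity condition regularity extends throughout.

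Once regularity is established, $W^\mu$ is a smooth $5$-dimensional submanifold of $W$; the $U(1)$-action on $W^\mu$ is free (scalar multiplication by a nontrivial $e^{i\theta}$ has no nonzero fixed quaternion) and proper, so $W_0 = W^\mu / U(1)$ is a smooth $4$-manifold. The inclusion $W^\mu \hookrightarrow M^\mu$ is $U(1)$-equivariant with both quotient maps principal $U(1)$-bundles, so the induced map $W_0 \hookrightarrow M_0$ is smoothly injective with injective differential; properness of the $U(1)$-action together with closedness of $W$ in $M$ upgrade this injective immersion to a smooth embedding.
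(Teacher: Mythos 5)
Your approach differs from the paper's. The paper proves regularity everywhere by a uniform, explicit computation: Lemma~\ref{l:decompositionof3/2-spinor} gives a linear splitting $W = W_1 \oplus W_2$ with both summands inside $\mu^{-1}(0)$; writing $\psi = \psi_1 + \psi_2$ in matrix form converts $\mu(\psi) = 0$ into the rank-$3$ linear system \eqref{eq:matrixequation} in $\psi_2$, whose kernel is a line whenever $\psi_1 \neq 0$, and by the symmetry in $\psi_1 \leftrightarrow \psi_2$ this covers all of $W^{\mu}$ and yields the explicit $5$-parameter chart \eqref{eq:localcoordinateswmu}. You instead want to check regularity at a single point $\phi_0 = I^* \otimes k + J^* \otimes 1$ and propagate it by a symmetry group. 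The preliminary observations (that $W$ is not quaternionic, that $\dim_{\mathbf{R}} W = 8$, that the expected dimension of $W^{\mu}$ is $5$, and the linear constraint $v(I) = k \cdot v(J) - j \cdot v(K)$ for $T_{\phi_0} W$) are all correct, and the final quotient-and-embedding step is fine.

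However the propagation step has a genuine logical gap. You argue that the orbit of $G := \SPN(3) \times U(1) \times \mathbf{R}_+$ through $\phi_0$ is open in $W^{\mu}$, and that ``by openness of the regularity condition regularity extends throughout.'' This inference does not hold as stated. Equivariance of $\mu|_W$ does propagate regularity to the full orbit $G \cdot \phi_0$, and if the action is locally free this orbit is $5$-dimensional, hence open in $W^{\mu}$ near $\phi_0$. But ``open orbit plus the set of regular points is open'' does not imply regularity on all of $W^{\mu}$: a nonempty open subset of a level set can be proper. To close the argument you would additionally need (i) that $W^{\mu}$ is connected, and (ii) that $G \cdot \phi_0$ is closed in $W^{\mu}$ (e.g.\ via properness of the $G$-action), so that the orbit is clopen and hence all of $W^{\mu}$ — equivalently, that $G$ acts transitively on $W^{\mu}$. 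Neither of these is established in your proposal, and connectivity of $W^{\mu}$ is not obvious a priori (it is a $5$-dimensional real quadratic locus in $\mathbf{R}^8 \setminus \{0\}$; the paper's chart \eqref{eq:localcoordinateswmu} makes it plausible but you don't have access to that). You should also actually carry out the rank computation of $d_{\phi_0}\mu|_W$ and the finite-stabiliser check rather than assert them. The cleaner route — and the one the paper takes — is to bypass the symmetry argument entirely and verify regularity at every point of $W^{\mu}$ directly via the decomposition $W = W_1 \oplus W_2$, which reduces $\mu(\psi)=0$ to a rank-$3$ linear system and makes the $5$-manifold structure manifest.
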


\begin{Lemma}\label{l:decompositionof3/2-spinor}
    We have a decomposition $W=W_1\oplus W_2$
    where the subspace $W_1,W_2$ are in the zero set of the quadratic map $\mu$. One choice for such decomposition is given by
    \[W_1=\{ I^*\otimes Is-J^*\otimes Js: s\in\slashed{S}\},\]
    and
    \[W_2=\{ I^*\otimes Is-K^*\otimes Ks: s\in\slashed{S}\}.\]
\end{Lemma}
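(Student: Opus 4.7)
My plan is to verify the two claims in the lemma separately: first the direct-sum decomposition $W = W_1 \oplus W_2$, and second the inclusion $W_1 \cup W_2 \subset \mu^{-1}(0)$.

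The decomposition part is essentially formal. The parameterizations $s \mapsto I^* \otimes Is - J^* \otimes Js$ and $s \mapsto I^* \otimes Is - K^* \otimes Ks$ are injective (the $I^*$-component determines $Is$, hence $s$), so $\dim_\mathbf{R} W_i = 4$. Each $W_i$ lies in $W = \ker c$ because $I^2 = J^2 = K^2 = -1$ gives, for example, $c(I^* \otimes Is - J^* \otimes Js) = I(Is) - J(Js) = -s - (-s) = 0$. Comparing the $J^*$- and $K^*$-components of an element of $W_1 \cap W_2$ forces both parameters to vanish, so $W_1 \cap W_2 = 0$. Finally, since $c$ is $\mathbf{C}$-surjective (one has $c(\iota(s)) = -3s$), $\dim_\mathbf{R} W = 12 - 4 = 8 = \dim_\mathbf{R}(W_1 \oplus W_2)$, closing the decomposition.

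For the moment-map vanishing, recall that $\mu(\psi) = 0$ iff $\psi \psi^*$ is a scalar multiple of $\textup{id}_\mathbf{H}$. Writing $\psi_1 := I^* \otimes Is - J^* \otimes Js$, the vanishing of the $K^*$-component gives $\psi_1 \psi_1^* = (Is)(Is)^* + (Js)(Js)^*$. Under the identification $\mathbf{H} \cong \mathbf{C}^2$ in which left multiplication by $I$ is the scalar complex structure, the first term is simply $ss^*$. The crucial quaternionic input is then that $\{s, Js\}$ is a Hermitian-orthogonal pair in $\mathbf{C}^2$ with $|Js| = |s|$---equivalently, $J$ interchanges the complex line $\mathbf{C} s$ with its Hermitian-orthogonal complement isometrically. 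Granting this, the two rank-one projectors sum to the identity and yield $ss^* + (Js)(Js)^* = |s|^2 \,\textup{id}_\mathbf{H}$. Hence $\psi_1 \psi_1^*$ is scalar, and $\mu(\psi_1) = 0$. The argument for $W_2$ is identical with $\{s, Ks\}$ replacing $\{s, Js\}$.

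The main delicate point---where it is easiest to make a sign or convention error---is the Hermitian orthogonality $\langle s, Js\rangle_\mathbf{C} = 0$, which turns on the convention-sensitive interplay between left and right quaternionic multiplications and the complex structure on $\mathbf{H} = \mathbf{C}^2$. The cleanest way to pin it down is to write $s = s_0 + s_1 J$ with $s_0, s_1 \in \mathbf{C}$, use the relations $Jc = \bar c J$ for $c \in \mathbf{C}$ and $J^2 = -1$ to compute $Js = -\bar s_1 + \bar s_0 J$, and then read off $\langle s, Js\rangle_\mathbf{C} = -s_0 s_1 + s_1 s_0 = 0$ by commutativity of $\mathbf{C}$. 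With this identity (and its $K$-analogue) in hand, the rest of the lemma follows by bookkeeping.
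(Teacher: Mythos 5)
Your proof is correct, and it takes a more conceptual route than the paper's on the key step. For the direct-sum decomposition your argument is essentially the same as the paper's (verify $W_i\subset W=\ker c$, note $W_1\cap W_2=0$, count dimensions). For the moment-map vanishing the paper proceeds by brute force: it writes $s=\begin{bmatrix}a+bi\\ c+di\end{bmatrix}$, displays $\psi_1$ as an explicit $2\times 3$ complex matrix, and asserts ``a direct calculation shows $\mu(\psi)=\psi\psi^*-\tfrac12\tr(\psi\psi^*)\Id$ vanishes.'' You instead isolate the structural reason: with the $K^*$-component zero, $\psi_1\psi_1^*=(Is)(Is)^*+(Js)(Js)^*$, and because $I$ is the scalar complex structure and $J$ is an antilinear isometry sending the line $\mathbf{C}s$ to its Hermitian orthocomplement, the two rank-one operators sum to $|s|^2\Id_{\mathbf{H}}$. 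This is tidier, makes the $W_2$ case obviously identical by symmetry, and exposes the quaternionic identity $\langle s,Js\rangle_{\mathbf{C}}=0$ (which you verify cleanly from $Jc=\bar c J$, $J^2=-1$) as the real content of the lemma. Your choice of identification $\mathbf{H}\simeq\mathbf{C}^2$ with left-multiplication by $I$ as the scalar structure matches the convention implicit in the paper's matrix (there $Is=\begin{bmatrix}-b+ai\\-d+ci\end{bmatrix}$, i.e.\ $I=i\cdot$), so the sign bookkeeping you flag as the delicate point is in fact consistent. The only cosmetic mismatch is that the lemma as stated writes $I\otimes Is$ rather than $I^*\otimes Is$; since the paper identifies $\IMH$ with its dual via the Euclidean metric, this has no mathematical effect.
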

\begin{proof}
    Clearly $W_1,W_2\subset W$ and also $W_1\cap W_2=\{0\}.$ Hence by dimension counting we deduce $W=W_1\oplus W_2$. We only show $\mu(W_1)=0$, since a similar argument applies to $W_2$. Consider $\psi=I^*\otimes Is-J^*\otimes Js\in W_1,$ and using \textcolor{black}{the identification $\mathbf{H}\simeq\mathbf{C}^2$ denote the spinor vector $s$ by $$\begin{bmatrix}
           a+bi \\
           c+di 
         \end{bmatrix}.$$}

    Under the identification $\textup{Hom}(\mathbf{C}\otimes \IMH,\mathbf{H})\simeq \textup{Hom}(\mathbf{C}^3,\mathbf{C}^2)$, the 3/2-spinor vector $\psi$ is given by the matrix
    \[\psi=\begin{bmatrix}
    -b+ai & c-di & 0\\
    -d+ic & -a+bi & 0
    \end{bmatrix}.\]
    Now a direct calculation shows $\mu(\psi)=\psi\psi^*-\frac{1}{2}\textup{tr}(\psi\psi^*)\textup{Id}$ vanishes.      
\end{proof}

\begin{proof}[Proof of Proposition \ref{p:w_0ismanifold}]
Since the action of $S^1$ on $W\backslash\{0\}$ and $\mu^{-1}(0)\backslash\{0\}$ is free, we only need to show that the intersection ${W}^{\mu}=W\cap\Big(\mu^{-1}(0)\backslash\{0\}\Big)$ is a 5-dimensional manifold. 

Using the decomposition in Lemma \ref{l:decompositionof3/2-spinor}, we may write $\psi\in W=W_1\oplus W_2$ as $\psi_1+\psi_2$
where
\[\psi_1=I^*\otimes Is_1-J^*\otimes Js_1\]
and
\[\psi_2=I^*\otimes Is_2-K^*\otimes Ks_2.\]
Under the identification $\slashed{S}\simeq\mathbf{C}^2$, we denote $s_1=\begin{bmatrix}
           a_1+b_1i \\
           c_1+d_1i 
         \end{bmatrix}$ and $s_2=\begin{bmatrix}
           a_2+b_2i \\
           c_2+d_2i 
         \end{bmatrix}$. So, under the identification $\textup{Hom}(\mathbf{C}\otimes \IMH,\mathbf{H})\simeq \textup{Hom}(\mathbf{C}^3,\mathbf{C}^2)$, the 3/2-spinor vector $\psi$ is given by the matrix
         \begin{equation}\label{eq:matrixform3/2-spinors}
         \psi=\begin{bmatrix}
        -(b_1+b_2)+(a_1+a_2)i & c_1-d_1i & d_2+c_2i\\
        -(d_1+d_2)+(c_1+c_2)i & -a_1+b_1i &  -b_2-a_2i
    \end{bmatrix}.    
         \end{equation}
    From the equation $\mu(\psi)=0$ one obtains the system of equations
    \[\begin{cases}
      a_1a_2+b_1b_2-c_1c_2-d_1d_2=0\\
      c_1a_2+d_1b_2+a_1c_2+b_1d_2=0\\
      d_1a_2-c_1b_2-b_1c_2+a_1d_2=0
    \end{cases}\]
    or equivalently, 
    \begin{equation}\label{eq:matrixequation}
        \begin{bmatrix}
        a_1 & b_1 & -c_1 & -d_1 \\
        c_1 & d_1 & a_1 &  b_1\\
        d_1 & -c_1 & -b_1 & a_1
    \end{bmatrix} \begin{bmatrix}
        a_2  \\
        b_2\\
        c_2 \\
        d_2
    \end{bmatrix}=0. 
    \end{equation}
    The $3\times 4$ matrix in \eqref{eq:matrixequation} is of rank $3$ unless $\psi_1=0$ (i.e., if $a_1,b_1,c_1,d_1$ vanish simultaneously); indeed, the rows of the matrix are orthogonal, and if one vanishes, all rows vanish.
    Consequently, there is a one dimensional solution set for $\psi_2$ (identified with $\begin{bmatrix}
        a_2  \\
        b_2\\
        c_2 \\
        d_2
    \end{bmatrix}$). One has an explicit formula for the solutions:
    \[\begin{bmatrix}
        a_2  \\
        b_2\\
        c_2 \\
        d_2
    \end{bmatrix}=\lambda\begin{bmatrix}
        b_1  \\
        -a_1\\
        d_1 \\
        -c_1
    \end{bmatrix}, \ \ \lambda\in\mathbf{R}.\]
    So if $\psi_1\neq0$, $W\cap \mu^{-1}(0)$ is a line bundle over  a neighborhood of $\psi_1$ in $W_1$, as a subbundle of $W_1\times W_2\to W_1$. Hence $W\cap \mu^{-1}(0)$ is a 5-dimensional manifold in a neighborhood of $\psi$; one indeed has the  local parameterization on $W^{\mu}$ near $\psi$ given by
    \begin{equation}\label{eq:localcoordinateswmu}
   (a_1,b_1,c_1,d_1,\lambda)\mapsto \begin{bmatrix}
        -(b_1-\lambda a_1)+(a_1+\lambda b_1)i & c_1- d_1i & -\lambda c_1+\lambda d_1i\\
        -(d_1-\lambda c_1)+(c_1+\lambda d_1)i & -a_1+b_1i &  \lambda a_1-\lambda b_1i
    \end{bmatrix}.     
    \end{equation}
    By the symmetry between $\psi_1$ and $\psi_2$, we obtain a 5-dimensional manifold structure for $W^{\mu}$  in a neighborhood of $\psi$, when $\psi_2\neq 0$. From this, the proposition follows.
\end{proof}

\begin{Rem}

    {It is worth mentioning that $0\in i\mathfrak{su}(2)$ is a regular value for the restriction of the quadratic map $\mu$ to the space of 3/2-spinors $W$.} To see this, first note that for any $\psi\in \textup{Hom}(\mathbf{C}^3,\mathbf{C}^2)$ one has 
    \[d_{\psi}\mu(h)=\psi h^*+h\psi^*-\frac{1}{2}\tr(\psi h^*+h\psi^*).\]
    \textcolor{black}{When $\psi=\psi_1+\psi_2\in W^{\mu}$ with $\psi_1\neq0$ as in the proof of Proposition \ref{p:w_0ismanifold},} we may write $\psi$ as the matrix
    \[\psi=\begin{bmatrix}
        -(b_1-\lambda a_1)+(a_1+\lambda b_1)i & c_1- d_1i & -\lambda c_1+\lambda d_1i\\
        -(d_1-\lambda c_1)+(c_1+\lambda d_1)i & -a_1+b_1i &  \lambda a_1-\lambda b_1i
    \end{bmatrix}.\]
    If one choose $h_1,h_2,h_3\in T_{\psi}W\simeq W$ given by 
    \[h_1=\begin{bmatrix}
        -b_1+a_1i & 0 &  -d_1+c_1i\\
        d_1-c_1i & 0&  -b_1 -a_1i
    \end{bmatrix},\]
    \[h_2=\begin{bmatrix}
        -d_1+c_1i & 0 &  b_1+a_1i\\
        -b_1+a_1i & 0&  -d_1 -c_1i
    \end{bmatrix},\]
    and
    \[h_3=\begin{bmatrix}
        c_1+d_1i & 0 &  a_1-b_1i\\
        -a_1-b_1i & 0&  c_1 -d_1i
    \end{bmatrix},\]
    then the three vectors $d_\psi\mu(h_1)$, $d_\psi\mu(h_2)$, and $d_\psi\mu(h_3)$ are linearly independent.
\end{Rem}

\subsection{The tangent bundle $TW^{\mu}$}\label{ss:tangentspacewmu} In the proof of Proposition \ref{p:w_0ismanifold}, we described the 5-dimensional manifold $W^{\mu}$ rather explicitly. Using the local coordinate \eqref{eq:localcoordinateswmu}, one has the local trivialization of the \textcolor{black}{tangent space $T_{(a,b,c,d,\lambda_0)}W^{\mu}$ by the }linearly independent vector fields
\[\frac{\partial}{\partial a}:=\begin{bmatrix}
        \lambda+i & 0 &  0\\
        0 & -1&  \lambda
    \end{bmatrix}, \ \ \frac{\partial}{\partial b}:=\begin{bmatrix}
        -1+\lambda i & 0 &  0\\
        0 & i&  -\lambda i
    \end{bmatrix}, \ \ \frac{\partial}{\partial c}:=\begin{bmatrix}
        0 & 1 &  -\lambda\\
        \lambda+i & 0&  0
    \end{bmatrix}, \]
    \[\frac{\partial}{\partial d}:=\begin{bmatrix}
        0 & -i &  \lambda i\\
        -1+\lambda i & 0&  0
    \end{bmatrix}, \ \ \frac{\partial}{\partial \lambda}:=\begin{bmatrix}
        a+bi & 0 &  -c+di\\
        c+di & 0&  a-bi
    \end{bmatrix}.\]
The first four vectors $\frac{\partial}{\partial a}, \frac{\partial}{\partial b}, \frac{\partial}{\partial c}, \frac{\partial}{\partial d}$ are mutually orthogonal, but they are not orthogonal to $\frac{\partial}{\partial \lambda}$.

The tangent bundle has a line sub-bundle $\mathcal{L}\subset TW^{\mu}$  spanned by the Killing vector field associated with the $U(1)$- action. This line bundle in the trivialization above is spanned by the rotation vector field
\begin{equation}\label{eq:thekillingvfonwmu}
   \color{black} a\frac{\partial}{\partial b}-b\frac{\partial}{\partial a}+c\frac{\partial}{\partial d}-d\frac{\partial}{\partial c}=\begin{bmatrix}
        -b\lambda-a+(-b+a\lambda)i & d+ci &  -\lambda d-\lambda ci\\
        -\lambda d-c + (-d+\lambda c)i & -b-ai&  \lambda b+\lambda a i
    \end{bmatrix}.
\end{equation}
This line bundle is the restriction of the line bundle on $M:=\textup{Hom}(\mathbf{C}\otimes \IMH,\mathbf{H})\backslash\{0\}$ generated by the $U(1)$-action which we denote still by $\mathcal{L}$. Under the hyperk\"ahler  quotient map $\tau :M^{\mu}\to M_0$ one has the orthogonal decomposition (Lemma \ref{l:decompositionofinverseimage})
\[TM|_{M^{\mu}}\simeq \tau^*TM_0\oplus (\mathcal{L}\oplus \IMH\cdot\mathcal{L})\]
    where the components $\tau^*TM_0$ and $(\mathcal{L}\oplus \IMH\cdot\mathcal{L})$ are invariant under the Clifford action (\cite{hitchin1987hyperkahler}). Under the decomposition above, one has 
     \begin{equation}\label{eq:mmutom_0}      TM^{\mu}\simeq \tau^*TM_0\oplus\mathcal{L}\end{equation} 
     and similarly,
     \begin{equation}\label{eq:wmutow_0}      TW^{\mu}\simeq \tau^*TW_0\oplus\mathcal{L}.\end{equation}

\subsection{A normal line bundle on $W^{\mu}$} \textcolor{black}{Remember} the orthogonal decomposition 
\[\textup{Hom}(\mathbf{C}\otimes \IMH,\mathbf{H})=\iota(\mathbf{H})\oplus W\]
where $\iota:\mathbf{H}\to \textup{Hom}(\mathbf{C}\otimes \IMH,\mathbf{H})$ is the embedding \eqref{eq:iotamap}. \textcolor{black}{As $M$ is an open subset of $\textup{Hom}(\mathbf{C}\otimes \IMH,\mathbf{H})$, $TM|_W$ is a trivial bundle and using the orthogonal decomposition above, one has a trivial subbundle whose fibers identified canonically with $\iota(\bf{H})$ that is orthogonal to $TW$. With abuse of notation, we denote this trivial subbundle with $\iota(\bf{H})$ again. Consequently,   $\iota(\mathbf{H})$ is orthogonal to the fibers of $TW^{\mu}$. In particular, $\iota(\mathbf{H})$ is orthogonal to the fibers of $\mathcal{L}|_{W^{\mu}}.$ Alternatively,
the matrix form of the elements of $\iota(\mathbf{H})$ is given by
\begin{equation}\label{eq:iotassmatrixform}
    \begin{bmatrix}
        -b+ai & -c+di &  -d-ci\\
        -d+ci & a-bi&  b +ai
    \end{bmatrix},
\end{equation}
and one can directly check the orthogonality using the local description of $TW^{\mu}$ given in Subsection \ref{ss:tangentspacewmu}.
} We then obtain a canonical line bundle: 

\begin{Prop}\label{p:specialnormalbundle}
    The intersection of the bundles $\mathcal{N}:=TM^{\mu}|_{W^{\mu}}\cap \iota(\mathbf{H})$ is an $S^1$-invariant line bundle over $W^{\mu}$ orthogonal to the line bundle $\mathcal{L}|_{W^{\mu}}$. \textcolor{black}{Also, $\mathcal{N}$ is invariant under $\spinc(3)$-action.}
\end{Prop}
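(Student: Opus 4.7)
The plan is to establish each of the four assertions in turn: rank one, orthogonality to $\mathcal{L}|_{W^{\mu}}$, $S^1$-invariance, and the $\spinc(3)$-action. The heart of the argument is the rank-one claim. Inside the ambient 12-dimensional space $V=\textup{Hom}(\mathbf{C}\otimes\IMH,\mathbf{H})$, the naive count $\dim T_{\psi}M^{\mu}+\dim\iota(\mathbf{H})-\dim V = 9+4-12=1$ gives the expected fiber dimension, so the content is to verify the transversality $T_{\psi}M^{\mu}+\iota(\mathbf{H})=V$ at every $\psi\in W^{\mu}$---equivalently, surjectivity of $d_{\psi}\mu\colon\iota(\mathbf{H})\to i\mathfrak{su}(2)$. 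Using the local parametrization \eqref{eq:localcoordinateswmu} from Proposition \ref{p:w_0ismanifold}, I would write the four basis vectors of $\iota(\mathbf{H})$ explicitly, apply the formula $h\mapsto \psi h^*+h\psi^*-\tfrac12\tr(\psi h^*+h\psi^*)\,\textup{id}$, and verify that the resulting $3\times 4$ matrix has full rank $3$ on the chart where $\psi_1\neq 0$; the symmetric chart with $\psi_2\neq 0$ follows by the $\psi_1\leftrightarrow\psi_2$ symmetry noted at the end of the proof of Proposition \ref{p:w_0ismanifold}.

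Once rank one is established, the remaining three assertions fall out quickly. For orthogonality to $\mathcal{L}|_{W^{\mu}}$, note that $\mathcal{N}_{\psi}\subset\iota(\mathbf{H})$ by construction, while the Killing vector field spanning $\mathcal{L}|_{\psi}$ lies in $T_{\psi}W^{\mu}\subset T_{\psi}W$, which under the identification $T_{\psi}W\cong W$ sits inside $W$ as a linear subspace of $V$; the orthogonal decomposition $V=\iota(\mathbf{H})\oplus W$ from Section \ref{s:preliminaries} then forces pointwise orthogonality. For the $S^1$-invariance, $\mathcal{N}$ is the intersection of two $S^1$-equivariant subbundles of $TM|_{W^{\mu}}$: the tangent bundle $TM^{\mu}|_{W^{\mu}}$ (invariant since the moment map is $S^1$-equivariant and $M^{\mu}$ is $S^1$-invariant) and the trivial subbundle $\iota(\mathbf{H})$ (invariant through the intertwining $\iota(s)\cdot u=\iota(su)$ compatible with the action generating the Killing vector field in \eqref{eq:thekillingvfonwmu}). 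For the $\spinc(3)$-action, each defining piece is $\spinc(3)$-equivariant: the map $\iota$ satisfies $g\cdot\iota(s)=\iota(g\cdot s)$ for $g\in\SPN(3)$, the central $U(1)\subset\spinc(3)$ acts by isometries preserving both $W=\ker c$ and $\iota(\mathbf{H})$, and the moment map $\mu$ is $\spinc(3)$-equivariant; hence the intersection $\mathcal{N}$ inherits a $\spinc(3)$-action covering the one on $W^{\mu}$.

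The main obstacle is the rank-one claim in the first paragraph. The surjectivity of $d_{\psi}\mu|_{\iota(\mathbf{H})}$ at \emph{every} $\psi\in W^{\mu}$, with no exceptional loci, cannot be concluded from the dimension count alone and requires the explicit matrix computation sketched above; a priori, rank could drop on a closed proper subvariety of $W^{\mu}$ and would spoil the line bundle structure. Once surjectivity is established, the remaining structural properties follow essentially formally from the general equivariance already built into $\mu$, $\iota$, and the Clifford multiplication $c$.
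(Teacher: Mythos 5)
Your proposal is correct and follows essentially the same route as the paper's proof: reduce the rank-one claim to showing that $d_w\mu|_{\iota(\mathbf{H})}$ has rank $3$ at every $w\in W^{\mu}$, compute the resulting $3\times 4$ matrix in the local parametrization \eqref{eq:localcoordinateswmu}, and let the other assertions follow from the orthogonality $V=\iota(\mathbf{H})\oplus W$ and the equivariance of $\mu$, $\iota$, and $c$. The only small thing you leave implicit that the paper makes sharp is \emph{why} the matrix has full rank everywhere with no exceptional locus: the paper observes that its three rows are mutually orthogonal, so if any were zero all would be, which is ruled out by $(a_1,b_1,c_1,d_1)\neq 0$ -- that is the clean way to close the gap you correctly flag as the one nontrivial point.
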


\begin{proof}
    Since $M^{\mu}$ is given as an inverse image of a regular point of $\mu$, the tangent bundle $TM^{\mu}$ is given by the equation $d\mu=0$. So in order to find the intersection $TM^{\mu}|_{W^{\mu}}\cap \iota(\mathbf{H})$, we need to solve the equation $d_w\mu|_{\iota({\mathbf{H}})}=0$ for every $w\in W^{\mu}$. Without loss of generality, we may assume, as in \eqref{eq:localcoordinateswmu},  $w$ is of the form 
    \[\begin{bmatrix}
        -(b_1-\lambda a_1)+(a_1+\lambda b_1)i & c_1- d_1i & -\lambda c_1+\lambda d_1i\\
        -(d_1-\lambda c_1)+(c_1+\lambda d_1)i & -a_1+b_1i &  \lambda a_1-\lambda b_1i
    \end{bmatrix}\]
     with $(a_1,b_1,c_1,d_1)\neq (0,0,0,0)$ and take $s$ of the form \eqref{eq:iotassmatrixform}.  From the equation
    \[d_w\mu(s)=w^*s+s^*w-\frac{1}{2}\tr(w^*s+s^*w)=0,\]
    we obtain the linear system
    \begin{equation}\label{eq:matrixequation2}
        \begin{bmatrix}
        a_1+\lambda_1 b_1 & b_1-\lambda_1a_1 & -c_1-\lambda_1 d_1 & -d_1+\lambda_1c_1 \\
        c_1+\lambda_1d_1 & d_1-\lambda_1c_1 & a_1+\lambda_1b_1 &  b_1-\lambda_1a_1\\
        d_1-\lambda_1c_1 & -c_1-\lambda_1d_1 & -b_1+\lambda_1 a_1 & -a_1+\lambda_1b_1
    \end{bmatrix} \begin{bmatrix}
        a  \\
        b\\
        c \\
        d
    \end{bmatrix}=0. 
    \end{equation}
    The $3\times 4$ matrix \eqref{eq:matrixequation2} is of rank $3$; indeed, the rows are nonzero and mutually orthogonal. So the equation \eqref{eq:matrixequation2} has a one-dimensional 
    space of solutions spanned by the vector
    \[\begin{bmatrix}
        a  \\
        b\\
        c \\
        d
    \end{bmatrix}=\begin{bmatrix}
        -b_1+\lambda_1a_1  \\
        a_1+\lambda_1b_1\\
        -d_1+\lambda_1c_1 \\
        c_1+\lambda_1d_1
    \end{bmatrix}.\]
    From this, the proposition follows. 
\end{proof}

\begin{Rem}\label{r:pushforwardbundle}
    Since $\mathcal{N}$ is $S^1$-invariant \textcolor{black}{and} orthogonal to  $\mathcal{L}|_{W^{\mu}}$, we obtain a \emph{pushforward subbundle} $\tau_*\mathcal{N}$  of $TM_0|_{W_0}$. The $S^1$-invariance and $\spinc(3)$-equivariance properties follow from similar properties of $TM^{\mu}$ and $\iota(\mathbf{H})$.
\end{Rem}

\begin{Def}\label{def:thelinebundlen}
     Denote by $\mathcal{T}$ the orthogonal complement of $\mathcal{N}\oplus \mathcal{L}|_{W^{\mu}}$ in $TM^{\mu}|_{W^{\mu}}$. We also 
     denote by $\mathcal{T}_0$ the orthogonal complement of $\tau_*\mathcal{N}$ in $TM_0|_{W_0}$.
\end{Def}

\subsection{Two examples of  hyperk\"ahler bundles}
    Let $S=\textup{Hom}(\mathbf{C}\otimes\IMH,\bf{H})$ and $M_0=\big(S\backslash\{0\}\big)\ssslash U(1)$ be as in Section \ref{s:aquaternionicmodulispace}. These are hyperk\"ahler manifolds which carry actions of $\SO(3)\times\SPNC(3)$ and  \textcolor{black}{$\SO(3)\times \SO(3)$}, respectively. Consider an oriented Riemannian manifold $Y$ with its canonical spin structure. \textcolor{black}{Denote by $P_{\SO}$ and $P_{\SPNC}$ its associated principal $\SO(3)$- and $\SPNC(3)$-bundles and define the principal $\SO(3)\times\SPNC(3)$-bundle $P:=P_{\SO}\times_Y P_{\SPNC}$ and principal $\SO(3)\times\SO(3) $-bundle $P':=P_{\SO}\times_YP_{\SO}$. Now the associated bundles $\mathbb{S}:=P\times_{\SO(3)\times\SPNC(3)}S$ and $\mathbb{M}_0:=P'\times_{\SO(3)\times\SO(3)}M_0$ are the hyperk\"ahler bundles. Note that $\mathbb{S}$ is just the twisted spinor bundle $TY\otimes\slashed{S}$.}
    
    These bundles can be given with structure groups $\SPNC(3)$ and $\SO(3)$ as follows. Consider the $\SPN(3)$-action on $S$ and $M_0$ given via the composition
    \begin{equation}\label{eq:diagonalspingroup}
        \SPNC(3)\buildrel{\Delta}\over\hookrightarrow\SPNC(3)\times\SPNC(3)\buildrel{(\xi,\Id)}\over\to \SO(3)\times\SPNC(3)
    \end{equation}
    where the map $\Delta$ is the diagonal embedding and $\xi:\SPNC(3)\to\SO(3)$ is the canonical homomorphism. Since we have a compatible bundle maps $P_{\SPNC}\buildrel{\Delta}\over\hookrightarrow P_{\SPNC}\times_YP_{\SPNC}\to P_{\SO}\times_YP_{\SPNC}$, we have the canonical identifications
  \begin{equation}\label{eq:simpleidentificationofspinorbundle}
        \mathbb{S}\simeq P_{\SPNC}\times_{\SPNC(3)}S
    \end{equation}
    and
    \begin{equation}\label{eq:simpleidentificationofinstantonbundle}\color{black}
        \mathbb{M}_0\simeq P_{\SPNC}\times_{\SPNC(3)}M_0\simeq P_{\SO}\times_{\SO(3)}M_0.
    \end{equation}
\subsection{\textcolor{black}{Special subbundles of hyperk\"ahler bundles}}\label{ex:mainhyperkahlerbundles} Since $W\subset S$ and $W_0\subset M$ are invariant under this diagonal action \eqref{eq:diagonalspingroup}, we obtain the associated subbundles
\begin{equation}\label{eq:simpleidentificationof3/2spinors}
        \mathbb{W}\simeq P_{\SPNC}\times_{\SPNC(3)}W\subset \mathbb{S}
    \end{equation}
    and
    \begin{equation}\label{eq:simpleidentificationofaquaternionsinstantonbundle}\color{black}
        \mathbb{W}_0\simeq P_{\SPNC}\times_{\SPNC(3)}W_0\simeq P_{\SO}\times_{\SO(3)}W_0\subset\mathbb{M}_0.
    \end{equation}
    Note that $\mathbb{W}$ is the bundle of 3/2-spinors. The bundles $\mathbb{W}$ and $\mathbb{W}_0$ do not inherit hyperk\"ahler bundle structures.

\subsection{The 3/2-Fueter operator}\sloppy
In Subsection \ref{ex:mainhyperkahlerbundles} we introduced the fiber bundle \textcolor{black}{$\mathbb{W}_0=P_{\SO}\times_{\SO(3)}W_0$} , which is subbundle of a hyperk\"ahler bundle \textcolor{black}{$\mathbb{M}_0=P_{\SO}\times_{\SO(3)}M_0$ that are associated bundles for the diagonal action of {$\SO(3)$ \eqref{eq:diagonalspingroup}}}. Under this action, the bundle $\mathcal{T}_0\to W_0$ (Definition \ref{def:thelinebundlen} and Remark \ref{r:pushforwardbundle}) is invariant; from Proposition \ref{p:specialnormalbundle}, we see that the normal bundle $\mathcal{N}$ is invariant under the aforementioned action and since $TM_0|_{W_0}=\mathcal{T}_0\oplus \tau_*\mathcal{N}$ is \textcolor{black}{$\SO(3)$} invariant, the invariance of $\mathcal{T}_0$ follows. We then define the associated bundle

\begin{equation}
    \mathbb{T}_0:=P_{\SO}\times_{\SO(3)}\mathcal{T}_0.
\end{equation}

We define a version of the Fueter operator for $\mathbb{W}_0$. 
Let $p:TM_0|_{W_0}\to \mathcal{T}_0$ be the orthogonal projection map (see Definition \ref{def:thelinebundlen}). Clearly, this induces a projection
\begin{equation}\label{eq:orthogonalprojectionofverticalbundles}
    p_0:\mathcal{V}\mathbb{M}_0|_{\mathbb{W}_0}\to \mathbb{T}_0.
\end{equation}
\begin{Def}\label{def:3/2-fueter}
    \textcolor{black}{Fix the Levi-Civita connection on $P_{\SO}$. Any section of the bundle $\mathbb{W}_0$ can be considered as a section of the bundle $\mathbb{M}_0$. Let then $\phi\in\Gamma(\mathbb{W}_0)$ be a section, and apply the Fueter operator on $\mathbb{M}_0$ to obtain a section $\mathfrak{F}\phi\in\Gamma(Y,\phi^*\mathcal{V}\mathbb{M}_0)$. Applying the projection map \eqref{eq:orthogonalprojectionofverticalbundles}, we obtain the \emph{3/2-Fueter operator} $\mathfrak{Q}$ by}

    \[\color{black}\mathfrak{Q}\phi:=p_0\circ\mathfrak{F}\phi\in \Gamma(Y,\phi^*\mathbb{T}_0).\]
    
\end{Def}

\begin{Rem}\label{r:naivefueteroperator}\textcolor{black}
{To define the 3/2-Fueter operator, we compose the Fueter operator with the projection onto the vector bundle $\mathbb{T}_0$. Naively, one might want to project the image of the Fueter operator $\mathfrak{F}$ onto the vertical bundle $\mathcal{V}\mathbb{W}_0$, which is a subbundle of $\mathbb{T}_0$. However, the main two theorems of the article do not hold for this operator: the converse part of the Haydys correspondence (Theorem \ref{th:maintheorem}) fails for this operator, and it will not be overdetermined elliptic (Theorem \ref{th:overdetermination}).    
}\end{Rem}

\section{A \textcolor{black}{Haydys} correspondence for $3/2$-spinors}\label{s:Haydyscorrepsondence}

In this section, we show the first part of the main Theorem of the paper. \textcolor{black}{Recall that $Y$ is an oriented Riemannian three-manifold, and hence it has a canonical $\SPN(3)$-structure $P_{\SPN}$. Hence any $\textup{Spin}^c$-structure is of the form $P_{\textup{Spin}^c}\simeq \big(P_{\SPN}\times_Y P_{U(1)}\big)/{\mathbb{Z}_2}$ where $P_{U(1)}$ is a principal $U(1)$-bundle over $Y$, denoted by $\det P_{\SPNC}$. The connections we consider on $P_{\SPNC}$, are induced from the Levi-Civita connection on $P_{\SPN}$ and a choice of a connection $A\in\mathcal{A}(P_{U(1)})$. Choosing such connection $A$, the corresponding Rarita-Schwinger operator $Q_A$ acts on the 3/2-spinor bundle $\mathbb{W}\to Y$ and $\mu:\mathbb{W}\to i\mathfrak{su}(2)$ is the  moment map (cf. Example \ref{ex:mainhyperkahlerquotient}). The fiber bundle $\mathbb{W}_0\to Y$ is then obtained from $\mathbb{W}$ by fiber-wise reduction using the moment map $\mu$ and the $U(1)$-action over which the $3/2$-Fueter operator $\mathfrak{Q}$ act. As the structure group of $\mathbb{W}_0$ is reduced to $\SO(3)$ the first part of the following theorem follows:}

\begin{Th}\label{th:maintheorem} \textcolor{black}{ 
Let $P_{\SPNC}\to Y$ be a $\SPNC(3)$-strcuture on $Y$ with a connection $A$ on $\det P_{\SPNC}$. 
If $\Phi\in \Gamma(\mathbb{W})$ be a nowhere vanishing section such that the pair $(A,\Phi)$ satisfies the degeneracy equations
\begin{equation}\label{eq:compactnessequation}
    \begin{cases} 
        Q_A \Phi = 0,\\
        \mu(\Phi)=0.
    \end{cases}
    \end{equation}
Then the induced section $\Phi_0\in \Gamma(\mathbb{W}_0)$ satisfies the 3/2-Fueter equation $\mathfrak{Q}\Phi_0=0$.}

\textcolor{black}{Conversely, for any $\Phi_0\in\Gamma(\mathbb{W}_0)$ satisfying $\mathfrak{Q}\Phi_0=0$, there exist a $\SPNC(3)$-structure  $P_{\SPNC}$, with a connection $A$ on $\det P_{\SPNC}$, and section $\Phi\in \Gamma(\mathbb{W})$ where $(A,\Phi)$ satisfies the equations \eqref{eq:compactnessequation}. Here $\mathbb{W}:=P_{\SPNC}\times_{\SPNC(3)}W$.}

\end{Th}

\begin{proof}
\textcolor{black}{
    We only need to prove the converse part of the statement. 
    The proof of this theorem is very similar to but slightly more involved than the proofs of \cite[Propositions 4.1]{haydys2012gauge} and \cite[Theorem A.1]{haydys2015compactness}; so we give the proof here.}

    \textcolor{black}{Consider a section $\Phi_0\in\Gamma(\mathbb{W}_0)$ satisfying $\mathfrak{Q}\Phi_0=0.$ Put $\mathbb{W}^{\mu+}:=P_{\SPN}\times_{\SPN(3)}{W}^{\mu}$ and note that the natural projection $\tau:\mathbb{W}^{\mu+}\to \mathbb{W}_0$ is a $U(1)$-bundle. So we obtain the pullback $U(1)$-bundle $\Phi_0^*\mathbb{W}^{\mu+}\to Y$. This gives a $\SPNC(3)$-structure
    \[P_{\SPNC}:=\big(P_{\SPN}\times_Y\Phi_0^*\mathbb{W}^{\mu+}\big)/\mathbb{Z}_2.\]
    There is a canonical connection on $\Phi_0^*\mathbb{W}^{\mu+}$ obtained as follows. The $U(1)$-bundle $W^{\mu}\to W$ has a canonical connection obtained by orthogonal projection onto the tangent spaces of the orbits of the $U(1)$-action. This connection rises to a connection on $\mathbb{W}^{\mu+}$ which pullbacks to a connection $B$ on $\Phi_0^*\mathbb{W}^{\mu+}$.}

    \textcolor{black}{Put $\mathbb{W}^{\mu}=P_{\SPNC}\times_{\SPNC(3)}W^{\mu}$. We show there is a canonical section $\Phi\in \Gamma(\mathbb{W}^{\mu})$ lifting $\Phi_0$ under the natural projection $\tau:\mathbb{W}^{\mu}\to \mathbb{W}_0$. We define $\Phi$ by giving a $\SPNC(3)$-equivariant map
    \[\tilde{\Phi}:P_{\SPNC}=\big(P_{\SPN}\times_Y\Phi_0^*\mathbb{W}^{\mu+}\big)/\mathbb{Z}_2\to {W}^{\mu}.\]
    For $[p,a]\in P_{\SPNC}$ where $p\in P_{\SPN}$, and $a\in \Phi_0^*\mathbb{W}^{\mu+}$. As an element of $\mathbb{W}^{\mu+}$, we can represent $a$ uniquely as $a=[p,u]\in \mathbb{W}^{\mu+}=P_{\SPN}\times_{\SPN(3)}W^{\mu}$, where $u\in W^{\mu}$. So we define $\Phi([p,a]):=u$. By construction, it follows that $\tau(\Phi)=\Phi_0$. }

    \textcolor{black}{For $p\in P_{\SPNC}$, the projection of $d_p\Phi$ onto $T_{\phi_0(p)}W_0$ in the decomposition \eqref{eq:wmutow_0} equals $d_p\Phi_0$. Hence, there exists a  basic form $b\in\Omega^1(P_{\SPNC},\mathfrak{u}(1))$ such that
    \begin{equation}\label{eq:covariantderivativephi&phi0}
        d\tilde{\Phi}=d\tilde{\Phi}_0+K_b(\tilde\Phi),
    \end{equation}
    where $K_{b(X)}(\cdot)$ for $X\in T_pP_{\SPNC}$ is the Killing vector field on $W^{\mu}$, and $\tilde\Phi_0:P_{\SPNC}\to W_0$ is the  $\SPNC(3)$-equivariant map induced from $\Phi_0$ (note that although the structure group of $\mathbb{W}_0$ is $\SO(3)$, we can lift the structure group to $\SPNC(3)$, via the projection $\SPNC(3)\to \SO(3)$). Since the form $b$ is basic, we can descend $b$ to a form on $Y$, and then left it to a basic $\mathfrak{u}(1)$-valued 1-form $\tilde{b}$ on the $U(1)$-bundle $\Phi_0^*\mathbb{W}^{\mu+}$. }
    
    \textcolor{black}{Denote by $\mathcal{H}^BP_{\SPNC}$ denotes the horizontal bundle associated with the Levi-Civita connection on $P_{\SO}$ and the connection $B$ on $\Phi_0^*\mathbb{W}^{\mu+}$. Now since $\mathfrak{Q}\Phi_0=0$, we have $c(d\tilde\Phi_0|_{\mathcal{H}^BP_{\SPNC}})$ is orthogonal $\mathcal{T}_0$, and hence $c(d\tilde\Phi_0|_{\mathcal{H}^BP_{\SPNC}})\in TM_0\cap \iota(\mathbf{H})$. For the connection $A:=B-\tilde{b}$, from \eqref{eq:covariantderivativephi&phi0} we deduce
    \[c(d\tilde\Phi|_{\mathcal{H}^AP_{\SPNC}})=c(d\tilde\Phi-K_b|_{\mathcal{H}^BP_{\SPNC}})=c(d\tilde \Phi_0|_{\mathcal{H}^BP_{\SPNC}})\in \iota(\mathbf{H}).\]
    Therefore $Q_A\Phi=0$.}

\end{proof}

\section{The overdetermination of \textcolor{black}{$\mathfrak{Q}$}}\label{s:overdetermination}
In this section, we show the symbol of the linearization of \textcolor{black}{$\mathfrak{Q}$} is injective. For a section $\phi\in\Gamma(\mathbb{W}_0)$, we may write 
\[\color{black}L_{\phi}\mathfrak{Q}=p_0\circ L_{\phi}\mathfrak{F}\]
since \textcolor{black}{the} projection map \eqref{eq:orthogonalprojectionofverticalbundles} is linear. By Proposition \ref{p:linearizedfueter}, we conclude
    \[\color{black}L_{\phi}\mathfrak{{Q}}=p_0\circ D^{\phi},\]
    where $D^{\phi}$ is the Dirac operator on the Clifford module $\phi^*\mathcal{V}\mathbb{M}_0\to Y.$ 

{\begin{Th}\label{th:overdetermination}
    The symbol of $p_0\circ D^{\phi}:\Gamma(Y,\phi^*\mathcal{V}\mathbb{W}_0)\to \Gamma(Y,\phi^*\mathbb{T}_0)$ is injective.
\end{Th}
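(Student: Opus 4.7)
The plan is to translate injectivity of the symbol $\sigma(p_0\circ D^\phi)$ into a transversality statement and verify it with a short Clifford-algebra identity.

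\textbf{Step 1 (symbol as transversality).} By Remark \ref{r:symbolofDirac} the symbol of $D^\phi$ at a nonzero $\xi\in T^*_yY$ is Clifford multiplication $\gamma(\xi)$, which is an automorphism of $\phi^*\mathcal{V}\mathbb{M}_0|_y$ because $\gamma(\xi)^2=-|\xi|^2$. Since $p_0$ is the orthogonal projection that kills exactly the line $\tau_*\mathcal{N}|_{\phi(y)}$, the kernel of $p_0\circ\gamma(\xi):T_{\phi(y)}W_0\to\mathcal{T}_0|_{\phi(y)}$ equals $T_{\phi(y)}W_0\cap \gamma(\xi)^{-1}(\tau_*\mathcal{N}|_{\phi(y)})$. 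Using $\gamma(\xi)^{-1}=-|\xi|^{-2}\gamma(\xi)$, injectivity for every $\xi\neq 0$ is equivalent to the transversality condition
\[\gamma(\IMH)\cdot n\,\cap\, T_{\phi(y)}W_0=\{0\},\]
where $n$ is a generator of the line $\tau_*\mathcal{N}|_{\phi(y)}$.

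\textbf{Step 2 (lift to $T_\phi M^\mu$).} By Proposition \ref{p:specialnormalbundle}, $\mathcal{N}_\phi\subset \iota(\bf{H})$ is orthogonal to the Killing line $\mathcal{L}_\phi$, so $\mathcal{N}_\phi\subset \mathcal{H}_\phi$. By Lemma \ref{l:decompositionofinverseimage} the horizontal bundle $\mathcal{H}_\phi$ is Clifford-invariant, hence $\gamma(\xi)n\in\mathcal{H}_\phi\subset T_\phi M^\mu$ for every $\xi\in\IMH$. Under the isometric identification $\mathcal{H}_\phi\simeq T_{\phi_0}M_0$ induced by $\tau$, the subspace $\mathcal{H}_\phi\cap T_\phi W^\mu$ corresponds to $T_{\phi_0}W_0$. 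Since $T_\phi W^\mu=W\cap T_\phi M^\mu$ and $\gamma(\xi)n$ already lies in $T_\phi M^\mu$, the membership $\gamma(\xi)n\in T_{\phi_0}W_0$ is equivalent to $\gamma(\xi)n\in W=\ker c$.

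\textbf{Step 3 (key identity).} Write $n=\iota(s)$ for a local section $s$ of $\mathcal{N}$; the explicit formula in Proposition \ref{p:specialnormalbundle} shows $s$ is nowhere vanishing on $W^\mu$. For any $\xi\in\IMH$, unwinding the definitions gives
\[c\bigl(\gamma(\xi)\iota(s)\bigr)=\sum_{X\in\{i,j,k\}}X\cdot(\xi\cdot Xs)=\Bigl(\sum_X X\xi X\Bigr)\,s.\]
A direct computation with $i^2=j^2=k^2=ijk=-1$ yields the Lichnerowicz-type identity
\[\sum_{X\in\{i,j,k\}}X\xi X=\xi\qquad\text{for every }\xi\in\IMH,\]
so $c(\gamma(\xi)\iota(s))=\xi\cdot s$.

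\textbf{Step 4 (conclusion).} Since $s$ is nowhere zero and $\bf{H}$ has no zero divisors, $\xi\cdot s\neq 0$ for every nonzero $\xi\in\IMH$. Hence $c(\gamma(\xi)n)\neq 0$, so $\gamma(\xi)n\notin W$ and, by Step 2, $\gamma(\xi)n\notin T_{\phi(y)}W_0$. Combined with Step 1, the symbol is injective.

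The main obstacle is spotting the compact identity $c\circ\gamma(\xi)\circ\iota=\xi\cdot(-)$; once it is in hand the Lichnerowicz-type verification is a one-liner, and the remaining work is merely unpacking Lemma \ref{l:decompositionofinverseimage} and Proposition \ref{p:specialnormalbundle}.
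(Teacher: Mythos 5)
Your proof is correct, and it takes a genuinely different route from the paper's. The paper also lifts the symbol to the level set $W^{\mu}$ via the quotient map $\tau$ and reduces to showing a fiberwise rank-4 statement, but from there it proceeds by brute force: it writes out explicit spanning vectors for $\mathcal{N}$, $I\mathcal{N}$, $J\mathcal{N}$, $K\mathcal{N}$, runs a Gram--Schmidt step to obtain a corrected vector $\widetilde{\partial/\partial\lambda}$, forms the rank-4 subbundle $\mathcal{T}'=I\mathcal{N}\oplus J\mathcal{N}\oplus K\mathcal{N}\oplus\textup{span}\{\widetilde{\partial/\partial\lambda}\}$, computes the $4\times 4$ matrix of the projected symbol in these coordinates, and verifies that its determinant equals $2(a^2+b^2+c^2+d^2)^2(1+\lambda^2)^2>0$. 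You instead observe that injectivity is exactly the transversality condition $\gamma(\IMH)\cdot n\cap T_{\phi_0}W_0=\{0\}$, then use the Clifford-invariance of the horizontal bundle (Lemma \ref{l:decompositionofinverseimage}) to reduce the membership $\gamma(\xi)n\in T_{\phi_0}W_0$ to the linear condition $\gamma(\xi)n\in W=\ker c$, and finally dispatch this with the identity $c\circ\gamma(\xi)\circ\iota=\xi\cdot(\,\cdot\,)$, which follows from $\sum_{X\in\{i,j,k\}}X\xi X=\xi$ for $\xi\in\IMH$ and the absence of zero-divisors in $\mathbf{H}$. Your argument is coordinate-free, shorter, and makes the geometric reason for overdetermination transparent (namely, the image of $\iota(\mathbf{H})$ under the hyperk\"ahler Clifford action never lands back in $\ker c$), whereas the paper's computation, while airtight, does not isolate this mechanism.

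Two small caveats worth flagging. First, the line $c(\gamma(\xi)\iota(s))=\sum_X X\cdot(\xi\cdot Xs)$ implicitly assumes $\gamma(\xi)$ acts by \emph{left} multiplication on the $\mathbf{H}$-factor; if one adopts the convention that the hyperk\"ahler action is right multiplication (so that it commutes with the left $U(1)$-action from Example \ref{eq:basicmomentmap}), the computation gives instead $c(\gamma(\xi)\iota(s))=-3s\xi$. Both expressions are nonzero whenever $\xi,s\neq 0$, so the conclusion is unchanged, but for a clean write-up the convention should be fixed and stated. Second, in Step 2 it is worth spelling out that the transversal intersection $T_\phi W^\mu=W\cap T_\phi M^\mu$ holds because $W$ is a linear subspace of $S$ meeting $M^{\mu}$ transversally (dimension count: $8+9-12=5=\dim W^\mu$); this is used silently when you convert membership in $T_{\phi_0}W_0$ to membership in $\ker c$. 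With these points made explicit, your proof is a valid and arguably preferable alternative to the one in the paper.
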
}

\begin{proof}
    The idea is to lift the operators via the hyperk\"ahler quotient map $\tau:\mathbb{M}^{\mu}\to \mathbb{M}_0$. 
    Consider $\psi\in\Gamma(\mathbb{M}^{\mu})$ and $\phi\in \Gamma(\mathbb{M}_0)$ such that $\tau\psi=\phi$. Denote the corresponding Dirac operators $D^{\psi}\curvearrowright\Gamma(\psi^*\mathcal{V}\mathbb{M})$ and $D^{\phi}\curvearrowright\Gamma(\phi^*\mathcal{V}\mathbb{M}_0)$; then we have the equality
    \begin{equation}\label{eq:diraccommutewithreduction}
        \tau_*\circ \sigma(D^{\psi})=\sigma(D^{\phi})\circ \tau_*
    \end{equation}
    where $\tau_*:\psi^*\mathcal{V}\mathbb{M}|_{\mathbb{M}^{\mu}}\to\phi^*\mathcal{V}\mathbb{M}_0$ is the bundle map covering the quotient $\tau:\mathbb{M}^{\mu}\to \mathbb{M}_0$ induced from the decomposition in Lemma \ref{l:decompositionofinverseimage} and $\sigma(.)$ denotes the symbol of the operator. The equality \eqref{eq:diraccommutewithreduction} follows simply from the fact that the symbol of the Dirac operator is given by Clifford multiplication with a covector.

    Consider the projection $p:\mathcal{V}\mathbb{M}|_{\mathbb{W}^{\mu}}\to \mathbb{T}$ where $\mathbb{T}:=P_{\spinc}\times_{\spinc(3)}\mathcal{T}$. Since we have the equality
    \[\tau_*\circ p=p_0\circ\tau_*\]
    on the pullback bundle $\psi^*\mathcal{V}\mathbb{M}$, and since the symbol map commutes with the projections $p,p_0$ we have
    \[\color{black}\sigma(p_0\circ D^{\phi})\circ\tau_*=p_0\circ\sigma(D^{\phi})\circ\tau_*=p_0\circ\tau_*\circ\sigma(D^{\psi})=\tau_*\circ p\circ\sigma(D^{\psi}).\]
    Now assume $\psi\in \mathbb{W}^{\mu}$ and $\phi\in \mathbb{W}_0$ with $\tau\psi=\phi$. Since the kernel of $\tau_*|_{\psi^*\mathcal{V}\mathbb{W}^{\mu}}:\psi^*\mathcal{V}\mathbb{W}^{\mu}\to\phi^*\mathcal{V}\mathbb{W}_0$ is the given by $\psi^*\mathcal{L}$, and since $\tau_*|_{\mathbb{T}}:\mathbb{T}\to\mathbb{T}_0$ is isomorphism on the fibers,  we conclude $p\circ\sigma(D^{\psi})$ vanishes on $\psi^*\mathcal{L}$. So we only need to show the fiberwise rank of the homomorphism $p\circ \sigma(D^{\psi})|_{\psi^*\mathcal{V}\mathbb{W}^{\mu}}:\psi^*\mathcal{V}\mathbb{W}^{\mu}\to \mathbb{T}$ is equal to $4$ \textcolor{black}{(note that the fiber-dimensions of $\mathcal{V}\mathbb{W}_0$ and $\mathbb{T}_0$ are $4$ and $7$,   respectively).}

    Consider the mutually orthogonal line bundles $I\mathcal{N}$, $J\mathcal{N}$ and $K\mathcal{N}$ that are formed by the action of the quaternionic elements $I,J,K$ on $\mathcal{N}$, and they are orthogonal to $\mathcal{N}$. Fix $w\in W^{\mu}$, and without loss of generality we may assume $w$ is of the following form (Proposition \ref{p:w_0ismanifold}):
     \[\begin{bmatrix}
        -(b-\lambda a)+(a+\lambda b)i & c- di & -\lambda c+\lambda di\\
        -(d-\lambda c)+(c+\lambda d)i & -a+bi &  \lambda a-\lambda bi
    \end{bmatrix}\] 
    with $(a,b,c,d)\neq(0,0,0,0)$ and by Proposition \ref{p:specialnormalbundle} we have
    \[\mathcal{N}=\textup{span}\left\{\begin{bmatrix}
        -a-\lambda b+i(-b+\lambda a) & d- \lambda c +i(c_1-\lambda d) & -c-\lambda d+i(-d+\lambda c)\\
        -c-\lambda d+i(-d+\lambda c) & -b+\lambda a+i(-a-\lambda b) &  a+\lambda b+i(-b+\lambda a)
    \end{bmatrix}\right\}.\]
    Similarly, we have
     \[I\mathcal{N}=\textup{span}\left\{\begin{bmatrix}
        b-\lambda a+i(a+\lambda b) & -c- \lambda d +i(d-\lambda c) & -d+\lambda c+i(c+\lambda d)\\
        d-\lambda c+i(-c-\lambda d) & a+\lambda b+i(-b+\lambda a) &  b-\lambda a+i(a+\lambda b)
    \end{bmatrix}\right\},\]
    \[J\mathcal{N}=\textup{span}\left\{\begin{bmatrix}
        c+\lambda d+i(-d+\lambda c) & b- \lambda a +i(a+\lambda b) & -a-\lambda b+i(-b+\lambda a)\\
        -a-\lambda b+i(b-\lambda a) & d-\lambda c+i(c+\lambda d) &  -c-\lambda d+i(-d+\lambda c)
    \end{bmatrix}\right\},\]
    and
    \[K\mathcal{N}=\textup{span}\left\{\begin{bmatrix}
        d-\lambda c+i(c+\lambda d) & a+ \lambda b +i(b-\lambda a) & b-\lambda a+i(a+\lambda b)\\
        -b-\lambda a+i(-a-\lambda b) & c+\lambda d+i(d-\lambda c) &  d-\lambda c+i(-c-\lambda d)
    \end{bmatrix}\right\}.\]
    Now a straightforward calculation shows $I\mathcal{N}$, $J\mathcal{N}$, and $K\mathcal{N}$ are orthogonal to $\mathcal{L}$ and hence $I\mathcal{N}\oplus J\mathcal{N}\oplus K\mathcal{N}$  is subbundle of $\mathcal{T}$.

    Remember from Subsection \ref{ss:tangentspacewmu} that $TW_w^{\mu}$ is spanned by the vectors $\frac{\partial}{\partial a}$, $\frac{\partial}{\partial b}$, $\frac{\partial}{\partial c}$, $\frac{\partial}{\partial d}$, and $\frac{\partial}{\partial \lambda}$, the first four of which are mutually orthogonal. By doing a Gram-Schmidt process, we may replace $\frac{\partial}{\partial \lambda}$ with a new one that, up to a constant, is of the form
    \[\widetilde{\frac{\partial}{\partial \lambda}}=\begin{bmatrix}
        a+\lambda b+i(b-\lambda a) & -\lambda c +i\lambda d & -c+i d\\
        c+\lambda d+i(d-\lambda c) & \lambda a-i\lambda b &  a-ib.
    \end{bmatrix}.\]
    A direct calculation shows that $\widetilde{\frac{\partial}{\partial \lambda}}$ is orthogonal to $I\mathcal{N}$, $J\mathcal{N}$, and $K\mathcal{N}$ thus $\mathcal{T}':=I\mathcal{N}\oplus J\mathcal{N}\oplus K\mathcal{N}\oplus \textup{span}\{\widetilde{\frac{\partial}{\partial \lambda}}\}$ is rank 4 subbundle of $\mathcal{T}$. Using the fact that the symbol of the Dirac operator is given by the Clifford multiplication, by restricting $\sigma(D^{\psi})|_w$ to the 4-dimensional space $\textup{span}\{\frac{\partial}{\partial a}, \frac{\partial}{\partial b}, \frac{\partial}{\partial c},\frac{\partial}{\partial d}\}|_w$ and projecting to the rank 4-dimensional space $\mathcal{T}'|_w$, we obtain its matrix form as
    \[\begin{bmatrix}
      -2(\lambda^2+1)b & 2(\lambda^2+1)a & -2(\lambda^2+1)d& 2(\lambda^2+1)c\\
        2(2\lambda^2+1)c-2\lambda d & -2(2\lambda^2+1)d-2\lambda c & -2(2\lambda^2+1)a+2\lambda b  &2(2\lambda^2+1)b+2\lambda a\\
        -2(\lambda^2+2)d+2\lambda c &-2(\lambda^2+2)c-2\lambda d&2(\lambda^2+2)b-2\lambda a&2(\lambda^2+2)a+2\lambda b\\
        a&b&c&d
    \end{bmatrix}.\]
    An explicit calculation shows the determinant of this $4\times 4$ matrix is the following 
    \[2(a^2+b^2+c^2+d^2)^2(1+\lambda^2)^2\]
    which is clearly positive and the theorem now follows. 
\end{proof}

\bibliographystyle{plain}

\providecommand{\bysame}{\leavevmode\hbox to3em{\hrulefill}\thinspace}
\providecommand{\MR}{\relax\ifhmode\unskip\space\fi MR }
\providecommand{\MRhref}[2]{%
  \href{http://www.ams.org/mathscinet-getitem?mr=#1}{#2}
}

\bibliography{Reference}

\end{document}